\def\det{\mathrm{det}}
\def\Id{\mathrm{Id}}
\def\d{\, d}
\def\E{\mathbb{E}}
\def\R{\mathbb{R}}
\def\L{\mathbb{L}}
\def\e{\mathrm{e}}
\def\dps{\displaystyle}
\def\ph{\varphi}
\def\one {  {1\hspace{-2.3mm}{1}}  }
\newcommand{\abs}[1]{\left\vert#1\right\vert}
\newcommand{\set}[1]{\left\{#1\right\}}
\newcommand{\pare}[1]{\left(#1\right)}
\newcommand{\sys}[2]{\left\{ \begin{array}{#1} \dps #2\end{array}\right.}
\newcommand{\arr}[2]{\left. \begin{array}{#1} \dps #2\end{array}\right.}
\newcommand{\bmat}{\begin{pmatrix}}
\newcommand{\emat}{\end{pmatrix}}
\newcommand{\el}{\\[8pt]}
\theoremstyle{plain}
\newtheorem{Theorem}{Theorem}[section]
\newtheorem{Lemma}[Theorem]{Lemma}
\newtheorem{Proposition}[Theorem]{Proposition}
\newtheorem{Definition}[Theorem]{Definition}
\newtheorem{Algorithm}[Theorem]{Algorithm}
\theoremstyle{definition}
\newtheorem{Remark}[Theorem]{Remark}
\newtheorem{Example}[Theorem]{Example}
\def\node{\mathcal{N}}
\def\group{\mathcal{S}}
\def\Hilb{\mathcal{H}}
\title{On a probabilistic interpretation of shape derivatives of Dirichlet groundstates with application to Fermion nodes.}
\author{Mathias Rousset\footnote{ INRIA Lille - Nord Europe \& Université Lille 1, Villeneuve d'Ascq, France.\newline E-mail: {\tt mathias.rousset@inria.fr}
}
}
\begin{document}
\maketitle
\tableofcontents

\begin{abstract}
This paper considers Schrödinger operators, and presents a probabilistic interpretation of the variation (or shape derivative) of the Dirichlet groundstate energy when the associated domain is perturbed. This interpretation relies on the distribution on the boundary of a stopped random process with Feynman-Kac weights. Practical computations require in addition the explicit approximation of the normal derivative of the groundstate on the boundary. We then propose to use this formulation in the case of the so-called fixed node approximation of fermion groundstates, defined by the bottom eigenelements of the Schrödinger operator of a fermionic system with Dirichlet conditions on the nodes (the set of zeros) of an initially guessed skew-symmetric function. We show that shape derivatives of the fixed node energy vanishes if and only if either (i) the distribution on the nodes of the stopped random process is symmetric; or (ii) the nodes are exactly the zeros of a skew-symmetric eigenfunction of the operator. We propose an approximation of the shape derivative of the fixed node energy that can be computed with a Monte-Carlo algorithm, which can be referred to as Nodal Monte-Carlo (NMC). The latter approximation of the shape derivative also vanishes if and only if either (i) or (ii) holds.
\end{abstract}

\section{Introduction and results}
Throughout this paper, we consider a Schrödinger operator in $\R^d$ of the form:
\begin{equation}
  \label{eq:H}
  H = -\frac{\Delta}{2} + V,
\end{equation}
with a smooth potential $V$ going to infinity at infinity, and acting on real valued functions generically denoted with the letter $\psi$ ('wave functions'). Such functions $\psi$ will be defined up to a real valued multiplicative constant ({\it e.g.} in eigenvalue and/or variational problems). We also consider a general family 
\begin{equation}
  \label{eq:omega}
  \theta \mapsto \Omega_\theta
\end{equation}
of open smooth domains in $\R^d$ depending sufficiently smoothly of a parameter $\theta \in \R^p$. The boundary will be denoted $\partial \Omega_\theta$. Gradients in the space $\R^d$ will be denoted $\nabla$, and gradients with respect to $\theta$, $\nabla_\theta$. The \emph{Dirichlet groundstate} and its \emph{Dirichlet groundstate energy} $(\psi^\ast_\theta,E_\theta^\ast)$,
are then defined as the unique bottom eigenelements of $H$, solution to the variational problem:
\begin{align}
 E_\theta^\ast &\dps :=    \inf\pare{ 
\frac{ \dps \int_{\Omega_\theta}   \psi  H\pare{\psi}  }{ \dps \int_{\Omega_\theta}   \psi^2 }
, \quad \psi \vert_{\partial \Omega_\theta} = 0 }  \nonumber \\
 & \dps =  \frac{ \dps  \int_{\Omega_\theta}   \psi^\ast_\theta  H\pare{\psi^\ast_\theta }  }{\dps \int_{\Omega_\theta}   (\psi^\ast_\theta)^2 } .  \label{eq:groundstate}
\end{align}
Calculus of variations detailed in Section~\ref{sec:shape} then yields the shape derivative of the groundstate energy through the formula:
\begin{equation}\label{eq:vargroundstate}
\nabla_\theta E_\theta^\ast =   - \frac{1}{2}  \dps \int_{\partial \Omega_\theta}  \abs{ \nabla \psi^*_\theta }^2 r_{\theta} \, d\sigma ,
\end{equation}
where in the above $\sigma$ is the usual surface measure induced by the canonical Euclidean structure $\R^d$, and $r_\theta$ is the \emph{shape derivative}, {\it i.e.} the field
$$
r_\theta: \partial \Omega_\theta \to \R^p
$$
such that formally the boundary variation writes down:
$$
  \partial \Omega_{\theta+d \theta} = \set{x+ n(x)  r_{\theta}(x)  \cdot  d \theta \, \vert \,   x \in \partial \Omega_{\theta}} ,
$$
where $n(x)$ is the exterior normal vector at $x\in \partial \Omega_\theta$. If $\set{ \psi_{\theta} }_{\theta \in \R^p}$ is a smooth family of smooth functions such that $\psi_{\theta}(x)=0$ for $x \in \partial \Omega_\theta$, then the shape derivative $r_\theta$ can also be defined through
\begin{equation}
  \label{eq:boundvar}
  r_{\theta}(x) \nabla \psi_{\theta}(x) \cdot  n(x) = - \nabla_\theta \psi_{\theta}(x) \qquad \forall x\in \partial \Omega_\theta.
\end{equation}
Formula \eqref{eq:boundvar} can be proved as follows: from the Dirichlet conditions, one has for any small $h$:
$$
 \arr{rl}{
0 & = \psi^\ast_{\theta+h}\pare{x + h \cdot r_{\theta}(x) n(x) + {\rm O}(\abs{h}^2)} -\psi^\ast_{\theta}(x)  \\
& =  h \cdot r_{\theta}(x) \nabla \psi^\ast_{\theta}(x) \cdot  n(x) + h\cdot \nabla_\theta \psi^\ast_{\theta}(x) + { \rm O }(h).
}
$$
Differentiability in formula \eqref{eq:vargroundstate} is a classical result of abstract analytic perturbation of linear operators (see \cite{Kat76}), but can be proved directly with the variational formulation as in \cite{GarSab01}.

In Section~\ref{sec:proba}, we introduce a standard Wiener process (Brownian motion)
\[
 t \mapsto W_t,
\]
with some given initial distribution in $\Omega_\theta$. The first exit time of the domain $\Omega_{\theta}$ is denoted by
\begin{equation}\label{eq:tau}
 \tau := \inf\pare{t \geq 0 | W_t \in \partial \Omega_\theta}.
\end{equation}
 The long time probability distribution of the latter process with Feynman-Kac weights, and conditioned to remain in the domain $\Omega_{\theta}$ is denoted $d \eta^\ast_{\theta}$
 \begin{equation}
   \label{eq:eta}
   \arr{rl}{
 \int_{\Omega_\theta} \ph \d\eta^\ast_{\theta}  &\dps := \lim_{T \to +\infty} \frac{\dps \E \pare{ \ph(W_T) \one_{ T \leq \tau }  \e^{-\int_0^T V(W_s) ds }  } }{ \dps  \E\pare{\one_{T \leq \tau } \e^{-\int_0^T V(W_s)ds} } } . 
}
 \end{equation}
 It has a probability density function given by the signed groundstate $\psi^\ast_\theta $:
\begin{equation}
   \label{eq:proba_psistar}
   \arr{rl}{
 \int_{\Omega_\theta} \ph \d\eta^\ast_{\theta}  
&\dps = \frac{ \dps \int_{\Omega_\theta} \ph \, \psi^*_\theta  \d x}{ \dps\int_{\Omega_\theta} \psi^*_\theta \d x},
}
 \end{equation}
and the exponential rate of the evolution of the weighted extinction probability yields the groundstate energy:
\begin{equation}
   \label{eq:proba_Estar}
 \lim_{T \to +\infty} - \frac{1}{T} \ln \E\pare{ \one_{ T \leq \tau  }  \e^{-\int_0^T V(W_s)ds} }= E_\theta^\ast.
\end{equation}
The probabilistic interpretations \eqref{eq:eta}-\eqref{eq:proba_psistar}-\eqref{eq:proba_Estar} have some variants (see \eqref{eq:proba_psistar2}-\eqref{eq:proba_Estar2}) where a drift is added to the Wiener process and the range of the potential $V$ in the Feynman-Kac weight $ \e^{-\int_0^T V(W_s) ds }$ is reduced. This leads to Monte-Carlo methods with some \emph{importance sampling} variance reduction which can efficiently compute the couple $(\psi^\ast_\theta,E_\theta^\ast)$. This method has been widely used and studied in many fields. Special care is required to treat the weight $ \e^{-\int_0^T V(W_s) ds }$ when averages are computed. For instance when several random processes are simulated in parallel, some \emph{re-sampling} of the set of processes has to be carried out at regular time intervals, according to the weights associated with each process. We refer the reader to \cite{HamLesRey94,AssCaf00bis,AssCafKhe00} for applications in Quantum Chemistry (Diffusion Monte-Carlo or Pure Diffusion Monte-Carlo methods), to \cite{DouFreGor01,DouDelJas06} for applications in Bayesian statistics (Sequential Monte-Carlo methods), and to \cite{DelMic00,Del04,DelMic03,Rou06} for the associated mathematical analysis.

Then, we consider the weighted distribution of the Wiener process at the hitting time $\tau$, when the process is initially distributed according to $\eta^\ast_{\theta}$ defined in \eqref{eq:eta}. The latter distribution is denoted $\d\mu^\ast_{\theta,\lambda}$ and reads
\begin{equation}\label{eq:mu}
\arr{rl}{
\int_{\partial \Omega_\theta} \ph \d\mu^{\ast}_{\theta,\lambda} := & \E\pare{ \dps \ph(W_\tau) \one_{\tau < + \infty } \e^{-\int_0^\tau \pare{ V(W_s)-\lambda } \d s} \, \vert \, {\rm Law}(W_0) = \eta^\ast_{\theta} } \\
= & \dps \lim_{T \to +\infty}  \frac{\dps  \E\pare{ \ph(W_\tau) \one_{T \leq \tau < + \infty } \e^{-\int_0^\tau \pare{ V(W_s)-\lambda } \d s} } }{ \dps\E\pare{ \dps \one_{T \leq \tau } \e^{-\int_0^T \pare{ V(W_s)-\lambda } \d s}} }  
}
\end{equation}
which verifies
\begin{equation}\label{eq:hitdist}
\arr{rl}{
\int_{\partial \Omega_\theta} \ph \d\mu^{\ast}_{\theta,\lambda} 
= &\dps -\frac{1}{\dps 2(E^\ast_\theta - \lambda   ) \int_{\Omega_\theta} \psi^*_\theta} 
   \int_{\partial \Omega_\theta} \ph \nabla \psi^*_\theta \cdot n \d\sigma .
}
\end{equation}
Formula \eqref{eq:hitdist} holds for any $\lambda < E^*_\theta$ and is the grounding formula of this paper. Note that it can be related to the Dirichlet energy variation $\nabla_\theta E_\theta^\ast $ in \eqref{eq:vargroundstate} by remarking that $\abs{\nabla \psi^*_\theta \cdot n} =\abs{\nabla \psi^*_\theta}$.  Up to our knowledge, the formula \eqref{eq:hitdist} has never been pointed out in the literature, although the sensitivity analysis carried out in \cite{CosGobElk06} yields a similar formula but at finite time (as opposed to large time, which is the case here).

Approximations of $\nabla_\theta E_\theta^\ast$ with Monte-Carlo methods can then be carried out using an approximating
sequence of the Dirichlet groundstate \eqref{eq:groundstate}  $\psi^n_\theta \xrightarrow[]{n \to \infty}{\psi^\ast_\theta}$, and random samples of size $N$ approximating the probabilistic formulations \eqref{eq:eta}-\eqref{eq:mu}, denoted $\eta^{N}_{\theta} \xrightarrow[]{N \to \infty} \eta^\ast_{\theta}$ and  $\mu^{N}_{\theta,\lambda}  \xrightarrow[]{N \to \infty}  \mu^\ast_{\theta,\lambda}$. The following identity can then be used:
\begin{equation}\label{eq:hitdistMC}
\arr{rl}{
\nabla_\theta E_\theta^\ast &\dps =   \frac{(E^\ast_\theta - \lambda   )}{\int_{\Omega_\theta} \psi^\ast_\theta \d \eta^\ast_{\theta}} \int_{\partial \Omega_\theta} r_\theta \nabla \psi^\ast_\theta.n \d \mu^\ast_{\theta,\lambda} \el
&\dps \sim \frac{(E^\ast_\theta - \lambda   )}{\int_{\Omega_\theta} \psi^n_\theta \d \eta^{N}_{\theta}} \int_{\partial \Omega_\theta} r_\theta \nabla \psi^n_\theta.n \d \mu^{N}_{\theta,\lambda} 
}
\end{equation}
The main limitation of computing $\nabla_\theta E_\theta^\ast$ with the Monte-Carlo technique suggested above is the necessity of an analytical approximation $\psi^n_\theta$ of the groundstate $\psi^\ast_\theta$. Especially, the pointwise convergence of the normal derivative $\nabla \psi^n_\theta.n $ may be hard to achieve. However, for practical situations in high dimension, we do not know any alternative point of view. A clear motivating example of a high dimensional problem is the case of Fermionic systems where the so-called \emph{fixed node approximation} is used. 

In Section~\ref{sec:fermion}, we introduce Fermionic groundstates $(\psi_{\rm F}^\ast,E_{\rm F}^\ast)$  associated to a finite symmetry group $\group \subset O(\R^d)$ of the Hamiltonian $H$ in \eqref{eq:H}; where $O(\R^d)$ denotes the group of isometries. $\group $ is simply the permutation goup of identical particles for physical systems. Fermionic groundstates are the solutions to the variational problem:
\begin{equation}\label{eq:fermigroundstate}
\arr{rl}{
 E_{\rm F}^\ast& := \dps   \inf\pare{ \frac{ \dps \int_{\R^d}   \psi  H\pare{\psi} }{ \dps  \int_{\R^d}   \psi^2  } , \quad \forall S \in \group, \, \, \psi\circ S = \det (S) \, \psi }  \\
&\dps =  \frac{ \dps \int_{\Omega_\theta}   \psi^\ast_{\rm F}  H\pare{\psi^\ast_{\rm F} } }{ \dps \int_{\Omega_\theta}   (\psi^\ast_{\rm F})^2 }  .
}
\end{equation}
Any function $\psi$ verifying the symmetry property
$$
\forall S \in \group, \, \, \psi\circ S = \det(S) \psi
$$
will be called \emph{skew-symmetric}, whereas any function $\psi$ verifying
$$
\forall S \in \group, \, \, \psi\circ S =  \psi
$$
will be called \emph{symmetric}. Note that existence of $(\psi_{\rm F}^\ast,E_{\rm F}^\ast)$ follows in our context from the fact that $H$ has a discrete spectrum, and is a classical result of spectral theory for more general potential $V$ (see references in \cite{CanJouLel06}); but uniqueness does not hold in general. In practice, $(\psi_{\rm F}^\ast,E_{\rm F}^\ast)$ is computed using a parametrization of skew-symmetric functions, and a numerical optimization procedure. This is the main problem of computational Quantum Chemistry, and forms a huge scientific field. We refer to \cite{CanLebMad06} for a mathematical introduction with a consequent bibliography. See also the following two typical papers \cite{UmrFil05,TouUmr07} involving wave function optimization using a Monte-Carlo method. Monte Carlo methods in computational Quantum Chemistry are referred to as Quantum Monte Carlo (QMC) methods. For physical systems, the parametrization is given as a finite sum of Slater determinants multiplied by a strictly positive symmetric factor (called the \emph{Jastrow factor}). The result of the latter optimization relies crucially on the quality of the parametrization, and will be called the \emph{trial wave function}, which is a skew-symmetric function denoted $\psi^{\rm I}_{\theta_0}$. We will then consider a family of skew-symmetric functions $\set{ \psi_{\theta}^{\rm I}}_{\theta\in \R^p}$, with an explicit analytical expression, which includes $\psi^{\rm I}_{\theta_0}$. The latter is used to define the \emph{nodal domains}
\[
 \Omega_\theta = \node_\theta^{+} \cup \node_\theta^{-},
\]
where:
\begin{equation}
   \label{eq:nodal}
   \sys{l}{
\node_\theta^{+} = \set{x \in \R^d \, \vert \, \psi_{\theta}^{\rm I}(x) > 0} \el
\node_\theta^{-} = \set{x \in \R^d \, \vert \, \psi_{\theta}^{\rm I}(x) < 0} \el
\partial \node_\theta = \set{x \in \R^d \, \vert \, \psi_{\theta}^{\rm I}(x) = 0}.
}
 \end{equation}
The \emph{fixed node approximation} consists then in computing with a Monte-Carlo method the Dirichlet groundstate $$(\psi^{FN}_\theta,E^{FN}_\theta)=(\psi^{\ast}_\theta,E^{\ast}_\theta)$$ of the variational problem with Dirichlet conditions in $\node_\theta^{+} \cup \node_\theta^{-}$. We refer the reader to \cite{CepCheKal77,CepAld80} for historical papers on the Monte-Carlo computation of fixed node groundstates. The latter variational problem reads explicitely:
\begin{equation}\label{eq:fixenodegroundstate}
\arr{rl}{
 E_{\theta}^{\rm FN}& := \dps   \inf\pare{ \frac{ \dps \int_{\R^d}   \psi  H\pare{\psi} }{ \dps \int_{\R^d}   \psi^2 }, \quad  \psi\vert_{\partial \node_\theta} = 0 }  \\
&\dps = \frac{  \dps  \int_{\R^d}   \psi^{\rm FN}_{\theta}  H\pare{\psi^{\rm FN}_{\theta} } }{ \dps  \int_{\R^d}   (\psi^{\rm FN}_{\theta})^2    } \\
&\dps = \dps   \inf\pare{  \frac{  \dps \int_{\node_\theta^{+}}   \psi  H\pare{\psi} }{ \dps \int_{\node_\theta^{+}}   \psi^2}  , \quad  \psi\vert_{\partial \node_\theta} = 0 }  \\
&\dps = \frac{  \dps  \int_{\node_\theta^{+}}   \psi^{\rm FN}_{\theta}  H\pare{\psi^{\rm FN}_{\theta} } }{ \dps  \int_{\node_\theta^{+}}  (\psi^{\rm FN}_{\theta})^2},
}
\end{equation}
the same definition holding in $\node_\theta^{-}$ by symmetry. From now on, $\node_\theta^{+}$ and $\node_\theta^{-}$ will be called respectively the positive and negative \emph{nodal domains}, $\partial \node_\theta$ the \emph{nodal surface} or \emph{nodes}, and $( E_{\theta}^{FN},\psi_{\theta}^{FN})$ the \emph{fixed node groundstate} elements. Now the key problem is the following: the energy (called Variational Monte Carlo, or in short VMC energy in the QMC literature)
\begin{equation}\label{eq:VMC_energy}
  E^{\rm I}_\theta = \frac{\dps \int_{\node_\theta^{+}}   \psi^{\rm I}_{\theta}  H\pare{\psi^{\rm I}_{\theta} }}{\dps \int_{\node_\theta^{+}} \pare{\psi^{\rm I}_{\theta} }^2}
\end{equation}
of the trial wave function $\psi^{\rm I}_\theta$ can be minimized towards the Fermionic groundstate energy $E_{F}^\ast$ using some optimization procedure, and the fixed node energy $E_{\theta}^{\rm FN} \leq E^{\rm I}_\theta$ associated with $\psi^{\rm I}_\theta$ can be computed using a Monte-Carlo method associated with \eqref{eq:proba_Estar}. However, a parameter $\theta$ optimizing the energy $E^{\rm I}_\theta$ of the trial wave function does not in general, for a given parametrization, optimize the fixed node groundstate energy $E_{\theta}^{\rm FN}$. An open problem is now to develop an algorithm that can directly minimize $E_{\theta}^{\rm FN}$. To precise this idea, let us consider the formulation of the exact Fermionic groundstate $(\psi_{F}^\ast,E_{F}^\ast)$ as a variational problem involving the fixed node groundstate $(\psi^{\rm FN}_\theta,E^{\rm FN}_\theta)$ and the nodal surface $\partial \node_{\theta}$, that is to say:
\begin{equation}\label{eq:fermifromfixednode}
\arr{rl}{
E_{\rm F}^{\ast}& := \dps   \inf\pare{ E_{\theta}^{\rm FN}, \quad  \text{$\psi^{\rm FN}_{\theta}$ solution of \eqref{eq:fixenodegroundstate}}, \quad \text{$\psi_{\rm I}^\theta$ skew-symmetric} } \\
&\dps =  \frac{\dps \int_{\R^d}   \psi^\ast_{\rm F}  H\pare{\psi^\ast_{\rm F} } }{ \dps \int_{\R^d}   (\psi^\ast_{\rm F})^2 }  . 
}
\end{equation}
An approach to solve the variational problem \eqref{eq:fermifromfixednode} consists in the computation of the shape derivative of the fixed node groundstate
\begin{equation}
  \label{eq:FN_deriv}
\nabla_\theta E_{\theta}^{\rm FN}  
\end{equation}
using a Monte-Carlo estimation based on \eqref{eq:hitdistMC}.  In this context, the key formulas \eqref{eq:hitdist}-\eqref{eq:hitdistMC} can be rewritten as follows. $t\mapsto W^+_t$ denotes a Brownian motion in $\node_{\theta}^+$, and $\tau^+$ the hitting time of $\partial \node_{\theta}$. Then the probability measure $\d \eta_{\theta}^{\rm FN}$ on $\node_{\theta}^+$ is defined by:
\begin{equation}
  \label{eq:etanode}
  \int_{\node_{\theta}^+} \ph \d \eta_{\theta}^{\rm FN}  :=  \lim_{T \to +\infty} \frac{\dps \E \pare{ \ph(W_T^+) \one_{T \leq \tau^+ }  \e^{-\int_0^T V(W_s^+) ds }  } }{ \dps  \E\pare{\one_{T \leq \tau^+ } \e^{-\int_0^T V(W_s^+)ds} } },
\end{equation}
and the measure $\d \mu_{\theta, \lambda}^{\rm FN}$ on $\partial \node_{\theta} $ for $\lambda<E_\theta^{\rm FN}$ is defined by:
\begin{equation}\label{eq:munode}
\int_{\partial \node_\theta} \ph \d\mu^{\rm FN}_{\theta, \lambda} := \lim_{T \to +\infty}  \frac{\dps \E\pare{ \dps \ph(W_{\tau^+}^+) \one_{T \leq \tau^+ < + \infty} \e^{-\int_0^{\tau^+} \pare{ V(W^+_s)-\lambda } \d s} } }{ \dps\E\pare{ \dps \one_{ T \leq \tau^+} \e^{-\int_0^T \pare{ V(W_s^+)-\lambda } \d s}} }.
\end{equation}
We will show that \eqref{eq:hitdistMC} becomes:
\begin{align}
  \label{eq:partialEFN}
  \nabla_\theta E_\theta^{\rm FN} & = \frac{(E^{\rm FN}_\theta - \lambda   )}{ \int_{\node_\theta^+} \psi^{\rm FN}_\theta \d \eta_\theta^{\rm FN}}  \int _{\partial \node_\theta}  r_\theta^+ \nabla^{+} \psi^{\rm FN}_\theta    \cdot n_+  \d \mu_{\theta,\lambda}^{\rm FN} \nonumber\\
& = \frac{2 (E^{\rm FN}_\theta - \lambda   )}{ \int_{\node_\theta^+} \psi^{\rm FN}_\theta \d \eta_\theta^{\rm FN}}  \int _{\partial \node_\theta}  r_\theta^+ \nabla^{\rm sy} \psi^{\rm FN}_\theta    \cdot n_+  \d \mu_{\theta,\lambda}^{\rm FN}.
\end{align}
In the above, $r_\theta^+$ is the shape derivative of $\node_{\theta}^+$,  $n_+$ is the associated exterior normal vector, and $\nabla^{\rm sy} \psi^{\rm FN}_\theta    \cdot n_+$ is the symmetrization of the normal groundstate gradient, that is to say:
\begin{equation}
    \label{eq:nsym}
    \nabla^{\rm sy} \psi \cdot n_+ = \frac{1}{2}\pare{\nabla^+ \psi \cdot n_+ -  \nabla^- \psi \cdot n_-},
  \end{equation}
where $\nabla^+ \psi \cdot n_+$ (resp. $\nabla^- \psi \cdot n_-$)  is the exterior normal derivative in $\node_\theta^+$ (resp. $\node_\theta^-$). By construction, $r_\theta^+$ is a skew-symmetric field on $\partial \node_\theta$, and $\nabla^{\rm sy} \psi \cdot n_+$ is symmetric. Our main result concerns then the link between (i) a \emph{symmetry breaking} of the measure $\mu_{\theta,\lambda}^{\rm FN}$, (ii) the fact that $\psi^{\rm FN}_\theta$ is an eigenfunction, and (iii) local exrema of $\theta \to E_\theta^{\rm FN}$. The link between (i), (ii) and (iii) can be stated through the following equivalent assertions:
\begin{enumerate}
\item The measure $\mu_{\theta,\lambda}^{\rm FN}$ on $\partial \node_\theta$ is symmetric ({\it i.e.} invariant by the action of $\group$).
\item When defined on the whole space $\R^d$, the gradient of the fixed node  goundstate $\nabla \psi^{\rm FN}_\theta$ is continuous on $\partial \node_\theta$.
\item The fixed node  goundstate $\psi^{\rm FN}_\theta$ is a skew-symmetric eigenfunction of $H$ on $\R^d$.
\item The fixed node energy variation vanishes $\nabla_\theta E_\theta^{\rm FN}=0$ for any parametrization $\theta \mapsto \partial \node_\theta$ of the nodal surface.
\end{enumerate} 
This yields a probabilistic characterization of the nodes (set of zeros) of skew-symmetric eigenstates of $H$ through a symmetry argument. This is an original result. 
The practical computation of $\nabla_\theta E_\theta^{\rm FN}$ using \eqref{eq:partialEFN} requires a Monte-Carlo estimator of the measures $(\mu^{\rm FN}_{\theta, \lambda},\eta^{\rm FN}_{\theta})$ on the one hand, and an analytical approximation of the fixed node groundstate elements $(\psi^{\rm FN}_{\theta}, \nabla^{\rm sy} \psi^{\rm FN}_{\theta} \cdot n_+)$ on the other hand (see also \eqref{eq:hitdistMC}). A key remark is that the elements $(\psi^{\rm FN}_{\theta}, \nabla^{\rm sy} \psi^{\rm FN}_{\theta} \cdot n_+)$ and any approximation with a skew-symmetric function smooth on $\R^d$, for instance with $(\psi^{\rm I}_{\theta}, \nabla \psi^{\rm I}_{\theta} \cdot n_+)$, share the same symmetry properties. This suggests the following approximation of the energy variation $\nabla_\theta E_\theta^{\rm FN}$ in \eqref{eq:partialEFN}:
\begin{eqnarray} 
  \nabla_\theta E_\theta^{\rm FN} \sim  \widehat{\nabla_\theta E_\theta^{\rm FN}}  &= &\dps \frac{ 2(E^{\rm FN}_\theta - \lambda   )}{ \int_{\node_\theta^+} \psi^{\rm I}_\theta \d \eta_\theta^{\rm FN}}  \int _{\partial \node_\theta} r_\theta^+ \nabla \psi^{\rm I}_\theta    \cdot n_+  \d \mu_{\theta,\lambda}^{\rm FN}   \label{eq:partialEFNapproxdef}  \\
& = & -\dps\frac{ 2(E^{\rm FN}_\theta - \lambda   )}{ \int_{\node_\theta^+} \psi^{\rm I}_\theta \d \eta_\theta^{\rm FN}}  \int _{\partial \node_\theta} \nabla_\theta \psi^{\rm I}_\theta  \d \mu_{\theta,\lambda}^{\rm FN}  \label{eq:partialEFNapprox} \\ 
& = &   \dps\frac{\dps \int_{\node_\theta^+} \pare{H-E_\theta^{\rm FN}}(\nabla_{\theta }\psi^{\rm I}_\theta) \d \eta_\theta^{\rm FN}}{\dps \int_{\node_\theta^+} \psi^{\rm I}_\theta \d \eta_\theta^{\rm FN}} . \label{eq:partialEFNapproxbis}
\end{eqnarray}
In the same way as for the exact expression $\nabla_\theta E_\theta^{\rm FN}$, the latter vanishes ($\widehat{\nabla_\theta E_\theta^{\rm FN}}=0$) if $1$, $2$, or $3$ holds. In return, if $\widehat{\nabla_\theta E_\theta^{\rm FN}}=0$ for any parametrization of the nodes then $1$, $2$ or $3$ holds. Such algorithms may be referred to as Nodal Monte-Carlo. They will require variance reduction techniques exploiting the symmetry structure, in principle such that the variance of $\widehat{\nabla_\theta E_\theta^{\rm FN}}$ scales appropriately to $0$ in the limit where $\mu_{\theta,\lambda}^{\rm FN}$ becomes symmetric (in other words, we seek for an 'asymptotically scaling variance reduction', also called  'zero-variance estimation' in the QMC literature). Ideas are given for future work on this matter.

Let us now position the content of this paper in the context of the QMC literature. First, we recall that efficiently computing in high dimension $d \gg 1$ the Fermionic groundstate \eqref{eq:fermigroundstate} using Monte-Carlo methods is a fundamental problem with many applications; for instance it amounts to solve the eigenvalue problem for excited eigenstates, where classical power methods fail. A general solution is known to be intractable, and is usually referred to as the \emph{sign problem} (see Remark~\ref{rem:signpb}). This explains the necessity of the fixed node approximation. The issue of optimizing the nodes of the trial wave function $\psi^{\rm I}_\theta$ in the fixed node approximation was pointed out in \cite{Cep91}, where a long discussion on the structure of Fermion nodes and appropriate (from this perspective) trial wave functions is provided. Yet state-of-the art numerical methods optimizing the trial wave function is based on either, (i) the minimization of the VMC energy $E^{\rm I}_\theta  $ in \eqref{eq:VMC_energy}, as in \cite{UmrFil05,TouUmr07}; or (ii) the minimization of the variance of the local energy
\begin{equation}\label{eq:Eloc}
E_{L} := V - (\psi^{\rm I}_\theta)^{-1}\frac{\Delta}{2}(\psi^{\rm I}_\theta)
\end{equation}
as in \cite{UmrFil05,HonLiu05}. As a consequence, an efficient method optimizing directly the fixed node energy $E^{\rm FN}_\theta$ in \eqref{eq:fixenodegroundstate}, that is to say the nodes of the trial wave function $\psi^{\rm I}_\theta$, remains an unsolved problem and motivates the content of this paper. However, methods to approximately compute the gradient $\nabla_\theta E_\theta^{\rm FN}$ were already suggested in the QMC literature in the more general context of the calculation of physical properties (or ''forces''). The main goal is to compute the derivative $\nabla_R E^{\ast}_{\rm F, \rm R}$ of the groundstate energy with respect to some parameter $R$ parametrizing the Hamiltonian, typically the potential energy $V\equiv V_R$. A classical example is the following: $R$ is the vector of the nuclei-nuclei distances in molecules. The trial wave function now also depends on $R$: $\psi^{\rm I}_{\theta} \equiv \psi^{\rm I}_{R,\theta}$. Well established methods are available for the exact and variance reduced computation of the variational energy gradient:
\begin{equation}
  \label{eq:EI_R}
  \nabla_{R} E^{\rm I}_{\theta,R} 
\end{equation}
where $ E^{\rm I}_{\theta,R}$ is defined by \eqref{eq:VMC_energy}. For instance, methods using coupling (correlated sampling) to reduce variance were tackled in \cite{FilUmr00}, and a general construction of variance reduced estimators ('zero variance-zero bias' estimators) was proposed in \cite{AssCaf03}; see also some applications in  \cite{TouAssUmr07}. The case of the fixed node energy gradient:
\begin{equation}
  \label{eq:EFN_R}
  \nabla_{R} E^{\rm FN}_{\theta,R} 
\end{equation}
where $ E^{\rm FN}_{\theta,R}$ is defined by \eqref{eq:fixenodegroundstate} is more intricate, and requires some approximation since the derivatives $\nabla_R \psi^{\rm FN}_{\theta,R}$ of the fixed node groundstate remain unknown. Note that computing the gradients $\nabla_{\theta} E^{\rm I}_{\theta}$ or $\nabla_{\theta} E^{\rm FN}_{\theta,R}$ can be seen as a particular case of computing respectively \eqref{eq:EI_R} or \eqref{eq:EFN_R}, since the nodes of the trial wave function $\psi^{\rm I}_{R,\theta}$ depend on $R$ {\it a priori} . An approximate formula has already been proposed to compute \eqref{eq:EFN_R}, see for instance equation $(54)$ in \cite{AssCaf03}, $(10)$ in \cite{CasMelRap03}, $(10)$ in \cite{BadNeeds08}, and $(14)$ in \cite{BadHayNed08}. The terms due to the variation of the nodes (which amounts to evaluate $\nabla_{\theta} E^{\rm FN}_{\theta,R}$) are called 'nodal Pulay terms', and were particularly pointed out in \cite{BadNeeds08,BadHayNed08}. In the above references, the formula used for the calculation is exactly the formula \eqref{eq:partialEFNapproxbis}. However, the interpretation in terms of a stochastic process stopped on the nodes in \eqref{eq:partialEFNapprox}-\eqref{eq:partialEFNapproxdef}, the analysis of the symmetry of the associated distribution on the nodes, and the suggestion of an associated Nodal Monte Carlo method are new results.

\vspace{.5cm}
The following classical textbooks are recommended:
\begin{itemize}
\item about spectral theory of operators: \cite{ReeSim78};
\item about elliptic theory of Partial Differential Equations: \cite{GilTru83};
\item about random processes and Feynman-Kac representations: \cite{KarShr91};
\item about Monte-Carlo methods in Quantum Chemistry (QMC): \cite{HamLesRey94,AssCaf00bis}.
\end{itemize}

\section{Shape derivatives of Dirichlet groundstates}\label{sec:shape}
In this section, some notations and results are recalled concerning Dirichlet groundstates of Schrödinger operators with a smooth potential. Then, formula \eqref{eq:vargroundstate} 
is proven formally, and references are given for rigorous proofs.

Consider the Schrödinger operator \eqref{eq:H} defined on $\R^d$ with a smooth potential $V$ bounded from below. $H$ defines a self-adjoint operator on the Hilbert space $\L^2(\R^d)$. For simplicity, $V$ is assumed to go to infinity at infinity such that $H$ has a compact resolvent, and thus a purely discrete spectrum. Generalization to operators involving continuous spectrum, although of fundamental importance in Quantum Chemistry, is left as technical extensions of the present work. Then a parametrization of smooth domains \eqref{eq:omega} is considered for $\theta \in \R^p$ such that $\partial \Omega_\theta$ has a smooth boundary for any $\theta$. One assumes then that there exists a set of diffeomorphisms smoothly indexed by $\theta$ and such that:
\begin{equation}
\label{eq:smoothdiffeo}
\sys{l}{
 (\theta,x) \mapsto R_\theta(x)  \text{ is smooth in $\R^p \times \R^d $}  \el
 R_0 = \Id  \el
R_\theta = \Id  \text{ outside some compact set} \el
\forall \theta \in \R^p, \quad \Omega_\theta = R_{\theta}(\Omega_0).  
}
\end{equation}
In this setting, the \emph{shape derivative} of $\theta \mapsto \Omega_\theta$ can be defined as the smooth field:
$$
r_{\theta}: \partial \Omega_\theta \to \R^p,
$$
verifying:
\begin{equation}
  \label{eq:shapederiv}
\forall x \in  \partial \Omega_\theta, \quad r_{\theta}(x) = \nabla_\theta R_\theta(  x )  \cdot n(x) ,
\end{equation}
and thus locally for small $h\in \R^p$:
$$
\partial \Omega_{\theta+h} \sim \set{ x + h \cdot r_{\theta}(x) n(x) \vert \, x \in \partial \Omega_{\theta}    }.
$$
Now classical results of spectral theory ensures that the Hamiltonian \eqref{eq:H} considered in $\L^2(\Omega_\theta)$ with Dirichlet boundary condition, is self-adjoint with domain $\mathcal D(H) \subset H^1_{0}(\Omega_\theta)$, where $H^1_{0}(\Omega_\theta)$ is the usual Sobolev space of function with Dirichlet conditions and square integrable first order derivatives. Moreover, $H$ has a unique (up to a multiplicative constant) signed groundstate $\psi^{\ast}_\theta $, solution of the variational problem \eqref{eq:groundstate}, or equivalently solution of the eigenvalue problem:
$$
\sys{l}{
 H(\psi^{\ast}_\theta) =E^{\ast}_\theta \psi^{\ast}_\theta  \el
\psi^{\ast}_\theta \vert_{ \partial \Omega_\theta}=  0  \el
\psi^{\ast}_\theta > 0 , 
}
$$
where $\cdot \vert_{ \partial \Omega_\theta}$ denotes the usual trace operator on the boundary. The regularity of $V$ then ensures that $\psi^{\ast}_\theta $ is smooth  on $\Omega_\theta$ and that $\nabla \psi^{\ast}_\theta \cdot n $ is smooth on $\partial \Omega_\theta $.  
The following derivative formula can now be stated.
\begin{Lemma} Consider domains $\theta \mapsto \Omega_\theta$ verifying \eqref{eq:smoothdiffeo}.
Then the Dirichlet energy $E^{\ast}_\theta$ solution of the variational problem \eqref{eq:groundstate} is differentiable with respect to the parameter $\theta$, and the variation formula \eqref{eq:vargroundstate} holds.
\end{Lemma}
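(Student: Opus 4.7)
The plan is to combine the classical analytic perturbation theory (to secure differentiability) with a direct Reynolds-type transport calculation on the Rayleigh quotient (to identify the boundary integrand). First I would dispose of the differentiability issue abstractly: pulling the eigenvalue problem back to the fixed domain $\Omega_0$ via $\tilde\psi_\theta = \psi^{\ast}_\theta \circ R_\theta$ turns $H$ acting on $\L^2(\Omega_\theta)$ into a smooth family of self-adjoint operators acting on the fixed space $\L^2(\Omega_0)$ with fixed Dirichlet form-domain $H^1_0(\Omega_0)$; Kato's analytic perturbation theory (or the direct variational argument of \cite{GarSab01}) then gives smoothness of $\theta \mapsto (\tilde\psi_\theta,E^{\ast}_\theta)$, hence existence of $\nabla_\theta \psi^{\ast}_\theta$ in a suitable sense (say, after pullback, as an element of $H^1_0(\Omega_0)$), together with the required smoothness up to the boundary needed to manipulate $\nabla\psi^{\ast}_\theta\cdot n$ and $\nabla_\theta\psi^{\ast}_\theta$.

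Next I would compute the derivative directly from the Rayleigh representation. Choose the normalization $\int_{\Omega_\theta} (\psi^{\ast}_\theta)^2 = 1$, so that after a Green-identity integration by parts (the boundary term vanishes because $\psi^{\ast}_\theta|_{\partial\Omega_\theta}=0$) one has
\begin{equation*}
E^{\ast}_\theta = \int_{\Omega_\theta} \tfrac{1}{2}|\nabla \psi^{\ast}_\theta|^2 + V(\psi^{\ast}_\theta)^2.
\end{equation*}
Applying the Reynolds transport formula for a moving domain with normal velocity $r_\theta$ yields an interior contribution plus a boundary term; the boundary term containing $V(\psi^{\ast}_\theta)^2$ drops because of Dirichlet, leaving only $\tfrac{1}{2}\int_{\partial\Omega_\theta}|\nabla\psi^{\ast}_\theta|^2 r_\theta\,d\sigma$.

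Then I would integrate the interior contribution $\int_{\Omega_\theta}\nabla\psi^{\ast}_\theta\cdot\nabla(\nabla_\theta\psi^{\ast}_\theta)+2V\psi^{\ast}_\theta\nabla_\theta\psi^{\ast}_\theta$ by parts once more. This produces a volume term $2\int_{\Omega_\theta} H(\psi^{\ast}_\theta)\,\nabla_\theta\psi^{\ast}_\theta$ and a boundary term $\int_{\partial\Omega_\theta} \nabla_\theta\psi^{\ast}_\theta\,\nabla\psi^{\ast}_\theta\cdot n\,d\sigma$. The relation \eqref{eq:boundvar} together with the fact that the tangential part of $\nabla\psi^{\ast}_\theta$ vanishes on $\partial\Omega_\theta$ (Dirichlet condition) converts this boundary integral into $-\int_{\partial\Omega_\theta}|\nabla\psi^{\ast}_\theta|^2 r_\theta\,d\sigma$. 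Inserting the eigenvalue equation $H\psi^{\ast}_\theta = E^{\ast}_\theta\psi^{\ast}_\theta$ and using $\int_{\Omega_\theta}\psi^{\ast}_\theta\nabla_\theta\psi^{\ast}_\theta = 0$ (obtained by differentiating the normalization constraint, where again the boundary contribution vanishes by Dirichlet) kills the remaining interior term and combines the two surface integrals into the claimed expression $-\tfrac{1}{2}\int_{\partial\Omega_\theta}|\nabla\psi^{\ast}_\theta|^2 r_\theta\,d\sigma$.

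The main obstacle is not the algebra but ensuring the regularity required to make the two successive integration-by-parts steps and the boundary identity \eqref{eq:boundvar} meaningful pointwise on $\partial\Omega_\theta$: one needs enough smoothness of $\psi^{\ast}_\theta$ and $\nabla_\theta\psi^{\ast}_\theta$ up to the boundary. This follows from elliptic regularity applied after pullback (smoothness of $V$ and $\partial\Omega_0$ give $\tilde\psi_\theta \in C^\infty(\overline{\Omega_0})$, and the implicit function / analytic perturbation argument gives joint smoothness in $(\theta,y)$), so all manipulations above are justified.
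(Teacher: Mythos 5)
Your computation is correct and lands on \eqref{eq:vargroundstate}, but it is organized differently from the paper's proof, and the difference is worth noting. The paper keeps the Rayleigh quotient in the non-integrated form $\int_{\Omega_\theta}\psi^\ast_\theta H(\psi^\ast_\theta)/\int_{\Omega_\theta}(\psi^\ast_\theta)^2$ and splits the total derivative by the chain rule into a domain variation at frozen $\psi^\ast_{\theta_0}$ plus a function variation at frozen domain. The first piece vanishes \emph{identically}, because $\psi^\ast_{\theta_0}H(\psi^\ast_{\theta_0})=E^\ast_{\theta_0}(\psi^\ast_{\theta_0})^2$ pointwise, so the quotient is constant in the domain; the entire boundary term then emerges from a double Green identity applied to $\int_{\Omega_{\theta_0}}\psi^\ast(H-E^\ast)(\nabla_\theta\psi^\ast)$, giving $+\tfrac12\int_{\partial\Omega}\nabla\psi^\ast\cdot n\,\nabla_\theta\psi^\ast\,d\sigma$, which \eqref{eq:boundvar} converts in one stroke to $-\tfrac12\int_{\partial\Omega}\abs{\nabla\psi^\ast}^2 r_\theta\,d\sigma$. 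You instead integrate by parts first, work with the normalized Dirichlet-form energy $\int_{\Omega_\theta}\tfrac12\abs{\nabla\psi^\ast_\theta}^2+V(\psi^\ast_\theta)^2$, and invoke the Reynolds transport formula; your domain-variation term does \emph{not} vanish but contributes $+\tfrac12\int_{\partial\Omega}\abs{\nabla\psi^\ast}^2 r_\theta\,d\sigma$, which is then overcompensated by the $-\int_{\partial\Omega}\abs{\nabla\psi^\ast}^2 r_\theta\,d\sigma$ coming from your single integration by parts plus \eqref{eq:boundvar}. The net effect is the same, and both arguments use the same two mechanisms (the eigenvalue equation to kill volume terms, and \eqref{eq:boundvar} plus the vanishing of the tangential gradient on the boundary). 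What the paper's bookkeeping buys is that no transport theorem is needed and no cancellation between boundary contributions of opposite sign occurs; what yours buys is that the quadratic form is the natural object for the rigorous pullback argument (it is exactly the form on $H^1_0$ that survives the change of variables by $R_\theta$), so your formal computation sits closer to the rigorous proof via \cite{GarSab01} or \cite{CosGobElk06} that both you and the paper delegate the analytic details to. One small caveat: the explicit normalization $\int_{\Omega_\theta}(\psi^\ast_\theta)^2=1$ and the resulting orthogonality $\int\psi^\ast_\theta\nabla_\theta\psi^\ast_\theta=0$ that you use require fixing the multiplicative constant consistently in $\theta$, which is legitimate but should be said; the paper avoids this by carrying the denominator through the quotient rule.
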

\begin{proof} The formal computation is detailed, and references are given for the rigorous result, which is not much more involved but less instructive from our point of view. Fix $\theta_0 \in \R^p$, and consider the formal chain rule:
  \begin{align*}
    \pare{ \nabla_\theta E^\ast_{\theta} }_{\theta=\theta_0}&=\pare{ \nabla_\theta \frac{\dps \int_{\Omega_\theta}   \psi^\ast_\theta  H\pare{\psi^\ast_\theta } }{\dps\int_{\Omega_\theta}   \pare{\psi^\ast_\theta}^2} }_{\theta = \theta_0} \\
&= \pare{  \nabla_\theta \frac{\dps\int_{\Omega_\theta}   \psi^\ast_{\theta_0}  H\pare{\psi^\ast_{\theta_0} } }{ \dps\int_{\Omega_\theta}   \pare{\psi^\ast_{\theta_0}}^2} }_{\theta = \theta_0}
+ \pare{ \nabla_\theta  \frac{\dps\int_{\Omega_{\theta_0}}   \psi^\ast_\theta  H\pare{\psi^\ast_\theta } }{\dps \int_{\Omega_{\theta_0}}   \pare{\psi^\ast_\theta}^2} }_{\theta = \theta_0} .
  \end{align*}
Since $(\psi^\ast_{\theta},E^\ast_{\theta})$ is an eigenelement, it yields:
$$
\nabla_\theta \frac{\dps\int_{\Omega_\theta}   \psi^\ast_{\theta_0}  H\pare{\psi^\ast_{\theta_0} } }{ \dps\int_{\Omega_\theta}   \pare{\psi^\ast_{\theta_0}}^2} 
= \nabla_\theta E^\ast_{\theta_0} = 0.
$$
Then formal differentiation yields
$$
\arr{rl}{
\nabla_\theta  \frac{\dps\int_{\Omega_{\theta_0}}   \psi^\ast_\theta  H\pare{\psi^\ast_\theta } }{\dps \int_{\Omega_{\theta_0}}   \pare{\psi^\ast_\theta}^2}
 & =    \frac{\dps\int_{\Omega_{\theta_0}}   \nabla_\theta\psi^\ast_\theta (H -E^\ast_{\theta})\pare{\psi^\ast_\theta } }{\dps \int_{\Omega_{\theta_0}}   \pare{\psi^\ast_\theta}^2}
 + \frac{\dps\int_{\Omega_{\theta_0}}   \psi^\ast_\theta  (H-E^\ast_{\theta})\pare{\nabla_\theta\psi^\ast_\theta } }{\dps \int_{\Omega_{\theta_0}}   \pare{\psi^\ast_\theta}^2},
}
$$
and since $(\psi^\ast_{\theta},E^\ast_{\theta})$ is an eigenelement, the first term of the right hand side vanishes so that finally:
$$
\arr{rl}{
\pare{ \nabla_\theta E^\ast_{\theta} }_{\theta={\theta_0}}
 & =    \frac{\dps\int_{\Omega_{{\theta_0}}}   \psi^\ast_{{\theta_0}}  (-\frac{1}{2}\Delta + V-E^\ast_{{\theta_0}})\pare{\nabla_{\theta_0}\psi^\ast_{\theta_0} } }{\dps \int_{\Omega_{{\theta_0}}}   \pare{\psi^\ast_{{\theta_0}}}^2} .
}
$$
Now applying Green's integration by parts two times, and remarking that $(H-E^\ast_{{\theta_0}})(\psi^\ast_{{\theta_0}})=0$, we get
$$
\pare{ \nabla_\theta E^\ast_{\theta} }_{\theta=\theta_0}
= \frac{1}{2}  \frac{\dps\int_{\partial \Omega_{{\theta_0}}}   \nabla \psi^\ast_{\theta_0} \cdot n  \, \nabla_{\theta_0} \psi^\ast_{\theta_0}\, d \sigma }{\dps \int_{\Omega_{{\theta_0}}}   \pare{\psi^\ast_{\theta_0}}^2}.
 $$
Then \eqref{eq:boundvar} applied to $\psi^\ast_{{\theta_0}}$ yields the result. The rigorous proof can be made using a change a variable with the diffeomophism $R_\theta$, and then exploiting the variational formulation (see e.g. Theorem~2 in \cite{CosGobElk06}).
\end{proof}

\section{Probabilistic interpretations}\label{sec:proba}
In this Section, the probabilistic formulations \eqref{eq:proba_psistar}-\eqref{eq:proba_Estar}-\eqref{eq:hitdist} are proven and detailed. Associated Monte-Carlo methods are recalled with some references.

Consider notations and assumptions of Section~\ref{sec:shape}.  Let $t \mapsto W_t$ be a standard Wiener process with exit time $\tau$ from $\Omega_{\theta}$ defined in \eqref{eq:tau}. The classical probabilistic interpretation of the eigenelements $(\psi^\ast_{\theta},E^\ast_{\theta})$ is recalled in the following lemma:
\begin{Lemma}
  \label{lem:probaground}
 Assume $W_0$ is distributed according to $\frac{\psi_{\rm init}(x) dx}{\int\psi_{\rm init}(x) dx }$ where $\psi_{\rm init} \in L^2(\Omega_{\theta})$, and $\psi_{\rm init}$ is non vanishing in each connected component of $\Omega_\theta$. The groundstate elements can be expressed through the long time behavior of the process with Feynman-Kac weights and conditioned by large exit times. This yields the formulas \eqref{eq:eta}-\eqref{eq:proba_psistar}-\eqref{eq:proba_Estar}.
\end{Lemma}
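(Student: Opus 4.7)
The plan is to reduce everything to a spectral expansion of the Dirichlet semigroup via the Feynman-Kac representation. Denote by $H_D$ the self-adjoint realization of $H$ on $\L^2(\Omega_\theta)$ with Dirichlet boundary conditions. Under the assumptions of Section~\ref{sec:shape}, $H_D$ has compact resolvent, hence a discrete spectrum $E^\ast_\theta = E_0 < E_1 \leq E_2 \leq \dots$ with an orthonormal eigenbasis $(\psi_n)_{n \geq 0}$, where $\psi_0 = \psi^\ast_\theta / \|\psi^\ast_\theta\|_{\L^2}$ is positive on each connected component of $\Omega_\theta$. The classical Feynman-Kac formula yields, for every bounded measurable $\ph$,
$$
\E_x\pare{ \ph(W_T)\one_{T \leq \tau} \e^{-\int_0^T V(W_s)\d s} } = \pare{\e^{-T H_D} \ph}(x).
$$

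First I would integrate this identity against the initial density $\psi_{\rm init}/\int \psi_{\rm init}$, then use self-adjointness of $\e^{-T H_D}$ on $\L^2(\Omega_\theta)$ to transfer the semigroup from $\ph$ onto $\psi_{\rm init}$. Expanding $\psi_{\rm init} = \sum_n c_n \psi_n$ with $c_n = \langle \psi_{\rm init}, \psi_n \rangle$, this gives
$$
\E\pare{\ph(W_T)\one_{T \leq \tau}\e^{-\int_0^T V(W_s)\d s}} = \frac{1}{\int \psi_{\rm init}}\sum_n c_n \e^{-T E_n}\int_{\Omega_\theta} \psi_n \ph.
$$
The key observation is that the assumption that $\psi_{\rm init}$ does not vanish on any connected component of $\Omega_\theta$ forces $c_0 = \langle \psi_{\rm init}, \psi_0 \rangle > 0$, because $\psi_0$ is strictly positive in each connected component (where $\psi_{\rm init}$ is not identically zero). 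Taking the ratio in \eqref{eq:eta} and multiplying numerator and denominator by $\e^{T E^\ast_\theta}$, all terms with $n \geq 1$ decay as $\e^{-T(E_n - E^\ast_\theta)} \to 0$, and the limit becomes $c_0 \int \psi_0 \ph / (c_0 \int \psi_0)$, yielding precisely \eqref{eq:proba_psistar}.

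For the energy formula \eqref{eq:proba_Estar}, I would specialize the above identity to $\ph \equiv 1$, so that
$$
\E\pare{\one_{T \leq \tau}\e^{-\int_0^T V(W_s)\d s}} = \frac{1}{\int \psi_{\rm init}}\sum_n c_n \e^{-T E_n}\int_{\Omega_\theta} \psi_n.
$$
Taking $-\frac{1}{T}\ln$ of both sides and exploiting again $c_0 > 0$ together with $\int \psi_0 > 0$, the leading term is $c_0 \e^{-T E^\ast_\theta} \int \psi_0$, so the logarithmic rate is precisely $E^\ast_\theta$.

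The main technical point to handle carefully is the convergence of the spectral sum and the strict positivity $c_0 > 0$: the discreteness of the spectrum of $H_D$ on the bounded domain $\Omega_\theta$ (granted by elliptic regularity and the smoothness of $\partial \Omega_\theta$) makes the spectral expansion legitimate in $\L^2$, and the positivity of $\psi_0$ on each component, together with the hypothesis that $\psi_{\rm init}$ is nonzero in each component, guarantees the nonvanishing overlap. The case of a disconnected $\Omega_\theta$ is the subtlety requiring the stronger assumption: on each component $\Omega_\theta^{(k)}$, the restricted groundstate is unique positive, and the hypothesis prevents the semigroup from forgetting some of these components in the long-time asymptotics. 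All other estimates are routine.
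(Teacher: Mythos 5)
Your proposal is correct and follows essentially the same route as the paper: a Feynman--Kac representation of the killed semigroup, the spectral expansion \eqref{eq:specsol}, the observation that $c_0=\langle\psi_{\rm init},\psi_0\rangle>0$ under the non-vanishing hypothesis, and extraction of the dominant term via the spectral gap. The only cosmetic difference is that the paper derives the Feynman--Kac identity by It\^o calculus applied to the parabolic solution, whereas you invoke the semigroup formula and self-adjointness directly.
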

\begin{proof}
The result follows from the classical representation of parabolic equations through the Feynman-Kac formula (see \cite{KarShr91}). Let us recall the different steps of the argument. First, consider $\ph \in C_c^{\infty}(\Omega_\theta)$ a smooth solution $(t,x) \mapsto u_t(x) \in  C^{\infty}(\R^+\times\Omega_\theta)$ of the parabolic problem:
$$
\sys{l}{
\partial_t u_t =-  H(u_t)  \el
u_t \vert_{\partial \Omega_{\theta}} = 0  \el
u_0 = \ph 
}
$$
Then using Itô calculus, it yields for any $\phi \in C^{\infty}(\R^+ \times \Omega_{\theta})$:
\begin{equation}
  \label{eq:Ito}
\arr{l}{
  \dps d \pare{ \phi_{T-t}(W_t)\e^{-\int_{0}^{t}V(W_s) ds} } = \\
\dps \qquad\pare{ \frac{\Delta}{2} -V - \partial_t} \pare{ \phi_{T-t} } (W_t)\e^{-\int_{0}^{t}V(W_s) \d s} \d t +\e^{-\int_{0}^{t}V(W_s) \d s} \nabla\phi_{T-t}(W_t) \cdot dW_t,
}
\end{equation}
so that applying the latter computation to $\phi=u$ between time $0$ and the stopping time $T\wedge \tau = \inf(T,\tau)$ yields:
$$
\arr{rl}{
\frac{\dps \int_{\Omega_{\theta}} u_T(x) \psi_{\rm init}(x) dx }{\dps \int_{\Omega_{\theta}} \psi_{\rm init}(x) dx } &= \E\pare{ u_{T-T\wedge \tau}(W_{T \wedge \tau}) \e^{-\int_{0}^{T \wedge \tau}V(W_s) ds} } \\
&= \E\pare{ \ph(W_{T}) \one_{\tau \geq T} \e^{-\int_{0}^{T }V(W_s) ds} } .
}
$$
Now denoting by $(\psi^{\ast,n}_{\theta},E^{\ast,n}_{\theta})_{n \geq 0}$ the full spectrum of $H$ normalized in $\L^2(\Omega_{\theta})$, the above Feynman-Kac representation reads:
\begin{equation}
  \label{eq:specsol}
  \E\pare{ \ph(W_{T}) \one_{\tau \geq T} \e^{-\int_{0}^{T }V(W_s) ds} } = \sum_{n \geq 0} \e^{-E^{\ast,n}_{\theta} T }\frac{\dps \int_{\Omega_{\theta}} \psi^{\ast,n}_{\theta} \psi_{\rm init} }{\dps \int_{\Omega_{\theta}} \psi_{\rm init} } \,\, \int_{\Omega_{\theta}} \psi^{\ast,n}_{\theta} \ph .
\end{equation}
Now using the fact that the groundstate has a spectral gap ($E^{\ast,1}_{\theta} >E^{\ast,0}_{\theta}=E^{\ast}_{\theta}$), the dominant term in \eqref{eq:specsol} when $T \to + \infty$ enables to verify that the groundstate has a sign $\psi^{\ast,0}_{\theta} >0$. Finally, taking the limit $\ph \to \one$ by dominated convergence in the formula above, and then the leading term in the limit $T\to+\infty$ yields \eqref{eq:proba_Estar}. \eqref{eq:proba_psistar} follows.
\end{proof}

In practice however, a diffusion solution to a stochastic differential equation with a repulsive drift at the boundary $\partial \Omega$ is used:
\begin{equation}
  \label{eq:drifteds}
  \sys{l}{
d X_t =(\psi^{\rm I})^{-1}\nabla \psi^{\rm I}(X_t) dt + dW_t  \el
\psi^{\rm I} \vert_{\partial \Omega_{\theta}}  =0,
}
\end{equation}
where $\psi^{\rm I} >0$ is a smooth function strictly positive in $\Omega_{\theta}$ and vanishing on $\partial \Omega_{\theta}$. In \cite{CanJouLel06}, sufficient conditions on the behavior of $\psi^{\rm I}$ near $\partial \Omega_{\theta}$ and at infinity are given for \eqref{eq:drifteds} to be well-posed, and to verify the following variant of \eqref{eq:proba_psistar}-\eqref{eq:proba_Estar}:
\begin{equation}\label{eq:proba_psistar2}
 \lim_{T \to +\infty} \frac{\dps \E \pare{ \ph(X_T)  \e^{-\int_0^T E_{L}(X_s) ds }  } }{ \dps  \E\pare{ \e^{-\int_0^T E_{L}(X_s)ds} } } 
=  \frac{ \dps \int_{\Omega_\theta} \ph \,  \psi^*_\theta \psi^{\rm I} }{ \dps\int_{\Omega_\theta} \psi^*_\theta \psi^{\rm I} },
 \end{equation}
\begin{equation}\label{eq:proba_Estar2}
 \lim_{T \to +\infty} - \frac{1}{T} \ln \E\pare{ \e^{-\int_0^T E_{L}(X_s)ds} }= E_\theta^\ast,
\end{equation}
where the so-called local energy is defined by \eqref{eq:Eloc}.
The proof of the latter probabilistic interpretation \eqref{eq:proba_psistar2}-\eqref{eq:proba_Estar2} is similar to the proof of Lemma~\ref{lem:probaground}, and is based on the mapping of the Hamiltonian $H$ to a weighted $\L^2$ space through:
$$
H_I = (\psi^{\rm I})^{-1} H (\psi^{\rm I} \cdot ) = -\frac{\Delta}{2}  - (\psi^{\rm I})^{-1} \nabla \psi^{\rm I} \cdot \nabla + E_{L}.
$$
Details and assumptions can be found in \cite{CanJouLel06}.

Next the probabilistic interpretation of the shape derivative given by formula \eqref{eq:hitdist} is proven.
\begin{Proposition}
  \label{pro:hitdist}Assume $W_0$ is distributed according to $\frac{\psi_{\rm init}(x) dx}{\int\psi_{\rm init}(x) dx }$ where $\psi_{\rm init} \in L^2(\Omega_{\theta})$, and $\psi_{\rm init}$ is non vanishing in each connected component of $\Omega_\theta$.
  Assume the boundary $\partial \Omega_{\theta}$ is smooth and uniformly Lipschitz, and consider the measure defined for any $\lambda < E^\ast_{\theta}$ by \eqref{eq:mu}. Then \eqref{eq:hitdist} holds.
\end{Proposition}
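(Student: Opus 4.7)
The strategy is to represent the conditional Feynman--Kac expectation defining $\mu^\ast_{\theta,\lambda}$ through the solution of an elliptic Dirichlet problem, and then use Green's identity together with the eigenvalue equation for $\psi^\ast_\theta$ to convert the volume integral into the desired boundary integral.

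First, for a smooth test function $\ph$ on $\partial\Omega_\theta$, introduce the unique solution $u\in C^2(\Omega_\theta)\cap C(\overline{\Omega_\theta})$ of the elliptic Dirichlet problem $(H-\lambda)u=0$ in $\Omega_\theta$ with $u\vert_{\partial\Omega_\theta}=\ph$. Well-posedness and regularity are standard: $\lambda<E^\ast_\theta$ places $\lambda$ strictly below the Dirichlet spectrum of $H$ on $\Omega_\theta$, while the smoothness of $V$ and the Lipschitz regularity of the boundary give the required Schauder-type estimates up to the boundary. Applying It\^o's formula as in \eqref{eq:Ito} to $u(W_t)\e^{-\int_0^t(V(W_s)-\lambda)\d s}$ between $0$ and $T\wedge\tau$ and letting $T\to+\infty$ produces the Feynman--Kac representation
\begin{equation*}
u(x) = \E_x\pare{\ph(W_\tau)\one_{\tau<+\infty}\,\e^{-\int_0^\tau(V(W_s)-\lambda)\d s}}.
\end{equation*}

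Second, integrate this identity against $\eta^\ast_\theta$, whose density is proportional to $\psi^\ast_\theta$ by \eqref{eq:proba_psistar}. By Fubini and the first-line definition in \eqref{eq:mu},
\begin{equation*}
\int_{\partial\Omega_\theta}\ph\,\d\mu^\ast_{\theta,\lambda} = \frac{\int_{\Omega_\theta} u\,\psi^\ast_\theta\,\d x}{\int_{\Omega_\theta}\psi^\ast_\theta\,\d x}.
\end{equation*}
Green's identity applied to the pair $(u,\psi^\ast_\theta)$, combined with $\psi^\ast_\theta\vert_{\partial\Omega_\theta}=0$ and $u\vert_{\partial\Omega_\theta}=\ph$, reads
\begin{equation*}
\int_{\Omega_\theta}\pare{u\,H\psi^\ast_\theta - \psi^\ast_\theta\,H u}\d x = -\tfrac12\int_{\partial\Omega_\theta}\ph\,\nabla\psi^\ast_\theta\cdot n\,\d\sigma,
\end{equation*}
and the left-hand side equals $(E^\ast_\theta-\lambda)\int_{\Omega_\theta} u\,\psi^\ast_\theta$ by the eigenvalue equation $H\psi^\ast_\theta = E^\ast_\theta\psi^\ast_\theta$ and the elliptic equation $Hu=\lambda u$. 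Dividing through by $(E^\ast_\theta-\lambda)\int\psi^\ast_\theta$ gives \eqref{eq:hitdist} for smooth $\ph$, and density extends it to the bounded measurable functions needed to identify $\mu^\ast_{\theta,\lambda}$. The equivalence with the second, long-time normalized expression in \eqref{eq:mu} for an arbitrary initial density $\psi_{\rm init}$ follows from the spectral argument of Lemma~\ref{lem:probaground}: applying the Markov property at time $T$ and the expansion \eqref{eq:specsol} shows that numerator and denominator in that second form both decay as $\e^{-(E^\ast_\theta-\lambda)T}$ times a common factor $\propto \int\psi^\ast_\theta\psi_{\rm init}$, whose ratio as $T\to+\infty$ recovers the conditional expectation under $\eta^\ast_\theta$.

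The hard part, I expect, will be the rigorous justification of the Feynman--Kac representation up to the potentially unbounded stopping time $\tau$, namely checking that $\E_x[\one_{\tau\geq T}u(W_T)\e^{-\int_0^T(V(W_s)-\lambda)\d s}]\to 0$ as $T\to+\infty$. Boundedness of $u$ (via the maximum principle for $(H-\lambda)u=0$ below the Dirichlet spectrum) combined with the decay $\E_x[\one_{\tau\geq T}\e^{-\int_0^T V(W_s)\d s}]\lesssim \e^{-E^\ast_\theta T}$ guaranteed by \eqref{eq:proba_Estar} delivers the required $\e^{-(E^\ast_\theta-\lambda)T}$ control thanks to the strict spectral gap $\lambda<E^\ast_\theta$. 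The Lipschitz regularity of $\partial\Omega_\theta$ is precisely what ensures that $\nabla\psi^\ast_\theta\cdot n$ and $\nabla u\cdot n$ have the boundary regularity needed for Green's identity to apply without further care.
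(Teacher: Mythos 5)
Your proof is correct and follows essentially the same route as the paper: the elliptic inhomogeneous Dirichlet problem $(H-\lambda)u=0$, $u\vert_{\partial\Omega_\theta}=\ph$, its Feynman--Kac representation at the exit time $\tau$, Green's identity against the eigenfunction $\psi^\ast_\theta$, and the Markov property to identify the long-time normalized form of \eqref{eq:mu}. The only (immaterial) difference is that you apply It\^o directly to the stationary solution $u$ rather than passing through the parabolic relaxation $h_t(\ph)$ as the paper does.
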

\begin{proof}
{\it Step $1$.} Let $\ph\in C^{\infty}(\overline{\Omega}_\theta)$. We claim that the parabolic differential equation with inhomogenous Dirichlet conditions
\begin{equation}
  \label{eq:parainhomo}
  \sys{l}{
    \partial_t h_t(\ph) = -\pare{H-\lambda}(h_t(\ph)) \el
    h_t(\ph) \vert _{\partial \Omega_{\theta}} = \ph \vert _{\partial \Omega_{\theta}}  \el
    h_0(\ph) = \ph 
  }
\end{equation}
has a unique smooth solution for $\lambda < E^\ast$ converging exponentially fast towards $h_{\infty}(\ph)$ unique smooth solution of the elliptic inhomogenous Dirichlet problem:
\begin{equation}
  \label{eq:ellinhomo}
  \sys{l}{
 \pare{H-\lambda}(h_{\infty}(\ph)) = 0 \el
h_{\infty}(\ph) \vert _{\partial \Omega_{\theta}} = \ph. 
}
\end{equation}
This is a classical consequence of spectral theory, but we recall briefly the basic arguments. Existence of a smooth solution of \eqref{eq:ellinhomo} follows from the fact that $(H-\lambda)^{-1}$ can be extended to a bounded operator of $\L^2(\Omega_{\theta})$ so that:
$$
h_{\infty}(\ph) = (H-\lambda)^{-1}\pare{H-\lambda}(\ph) - \ph,
$$
which indeed is solution of \eqref{eq:ellinhomo}. Note that in the above $  (H-\lambda)^{-1} $ implicitly refer to the operator with homogenous Dirichlet boundary condition, so that $ (H-\lambda)^{-1} \circ ( H-\lambda ) \neq \Id$ when operating on test function with inhomogenous boundary conditions. Then the homogenous solution of \eqref{eq:parainhomo} has been solved using spectral decomposition in \eqref{eq:specsol}, proving that such homogenous solution is unique and vanishes exponentially fast when $\lambda < E^\ast_{\theta}$. The latter analysis of the homogenous case proves uniqueness in \eqref{eq:parainhomo}-\eqref{eq:ellinhomo}, as well as exponential long time convergence of the time dependent equation \eqref{eq:parainhomo}.

{\it Step $2$.} We claim that for any $\ph\in C^{\infty}(\overline{\Omega}_\theta)$,
\begin{equation}
  \label{eq:hitdistinter}
 \frac{\dps \int_{\Omega_\theta}  h_{\infty}(\ph) \psi_{\rm init} }{\dps \int_{\Omega_\theta}  \psi_{\rm init}} := \E\pare{ \dps \ph(W_{\tau}) \one_{\tau < +\infty} \e^{-\int_0^{\tau} \pare{ V(W_s)-\lambda } \d s}  }
\end{equation}
where $h_{\infty}(\ph)$ is the solution to the elliptic partial differential equation with inhomogenous Dirichlet conditions \eqref{eq:ellinhomo}.
Indeed, using \eqref{eq:Ito} with test function $h_{T-t}(\ph)$, and stopped at time $T\wedge \tau = \inf(T,\tau)$ yields:
$$
\frac{\dps \int_{\Omega_\theta}  h_{T}(\ph) \psi_{\rm init} }{\dps \int_{\Omega_\theta}  \psi_{\rm init}}  := \E\pare{ \dps h_{T-\tau \wedge T}(\ph)(W_{\tau \wedge T }) \e^{-\int_0^{\tau \wedge T} \pare{ V(W_s)-\lambda } \d s} }.
$$
Now the event $\one_{\tau = + \infty}$ has null probability if $t \mapsto W_t$ is recurrent, and if the latter is transcient then using $\dps \lim_{\infty}V = + \infty$:
$$
\lim_{T \to +\infty} \E\pare{ \dps \one_{\tau = +\infty} \e^{-\int_0^{T \wedge \tau} \pare{V(W_s)-\lambda } \d s} } =0;
$$
so that taking the limit $T \to + \infty$ leads to \eqref{eq:hitdistinter}.

{\it Step $3$.} We claim that 
$$
\int_{\Omega_{\theta}} h_{\infty}(\ph) \psi^\ast_{\theta} = -\frac{1}{2(E^\ast_{\theta} - \lambda   )} 
   \int_{\partial \Omega_{\theta}} \ph \nabla \psi^\ast_{\theta}\cdot n \d\sigma.
$$
 Since $\psi^\ast_{\theta}$ is the groundstate:
$$
\psi^\ast_{\theta} = \frac{1}{E^\ast_{\theta} - \lambda}\pare{H-\lambda}(\psi^\ast_{\theta} ),
$$
so that integration by parts yields:
\begin{eqnarray*}
  \int_{\Omega_{\theta}} h_{\infty}(\ph) \psi^\ast_{\theta} &=& -\frac{1}{E^\ast_{\theta} - \lambda} \int_{\partial \Omega_{\theta}} \frac{1}{2}  \ph \nabla \psi^\ast_{\theta} \cdot n \d \sigma \\
&&+\frac{1}{E^\ast_{\theta} - \lambda} \int_{\Omega_{\theta}} \frac{1}{2} \nabla \psi^\ast_{\theta} \cdot \nabla h_{\infty}(\ph)  + (V-\lambda) h_{\infty}(\ph) \psi^\ast_{\theta}  \\
&=& -\frac{1}{E^\ast_{\theta} - \lambda} \int_{\partial \Omega_{\theta}} \frac{1}{2}  \ph \nabla \psi^\ast_{\theta} \cdot n \d \sigma + 0.
\end{eqnarray*}

{\it Step $4$.} 
Now using the Markov property of Brownian motion yields:
$$
\arr{l}{
\E\pare{ \dps \ph(W_\tau) \one_{T \leq \tau < +\infty } \e^{-\int_0^\tau \pare{ V(W_s)-\lambda } \d s} } = \\
\qquad \E\pare{ \E\pare{\dps \ph(\widetilde{W}_{\widetilde{\tau}}) \one_{{\widetilde{\tau}} < +\infty } \e^{-\int_0^{\widetilde{\tau}} \pare{ V(\widetilde{W}_s)-\lambda } \d s} \vert \widetilde{W}_0 = W_T } \one_{T \leq \tau } \e^{-\int_0^T \pare{ V(W_s)-\lambda } \d s} },
}
$$
where $((\widetilde{W}_t)_{t \geq 0},\widetilde{\tau})$ is an independent copy of $(W_t)_{t \geq 0},\tau)$. Together with {\it Step $2$} and  {\it Step $3$} with the probabilistic interpretation of the groundstate $\psi^\ast_\theta$ in  \eqref{eq:mu}-\eqref{eq:proba_psistar}, it completes the proof.
\end{proof}

\section{Fermion groundstates}\label{sec:fermion}
Consider the Schrödinger operator \eqref{eq:H}. Fermionic groundstates $(\psi_{F}^\ast,E_{F}^\ast)$ associated to \eqref{eq:H} are defined with respect to a finite symmetry group $\group \subset O(\R^d)$ of the potential $V$, where $O(\R^d)$ denotes the group of isometries of $\R^d$. A symmetry group of $V$ is defined by the property
$$
\forall S \in \group \quad V \circ S = V.
$$
Fermion systems appears in the special case when $\group$ contains symmetries with odd parity:
$$
\det(S) = -1 ,
$$
so that $H$ can be defined (since the Laplacian operator commutes with isometries) to operate on the Hilbert space of skew-symmetric function:
$$
\Hilb = \set{\psi \in \L^{2}(\R^d) \, \vert \, \forall S \in \group, \, \, \psi\circ S = \det(S) \psi  }.
$$ 
''Fermion'' groundstates are then the solutions to the variational problem \eqref{eq:fermigroundstate}.
\begin{Example}[Fermions]\label{ex:fermions1}
In the case of $N$ physical quantum particles of Fermionic type in dimension $3$, one has $\R^d = \R^{3N}$, and a potential of the form:
$$
V(x) := \sum_{i=1}^{N} V_{\rm ext}(x_i) + \sum_{1\leq i < j \leq N}  V_{\rm int}(x_i-x_j),
$$
where $x=(x_1,\ldots,x_N)$ are the $3$ dimensional coordinates of the particles, $V_{\rm ext}$ is an exterior smooth potential that goes to infinity at infinity, and $V_{\rm int}$ a smooth interaction potential that vanishes at infinity.
Then the discrete symmetry group is the permutation group $\group_N$ of physical particles. See  \cite{CanJouLel06} for the mathematical analyis of Quantum Monte-Carlo (QMC) methods in this context.
\end{Example}
Since in this paper, we restrict to smooth operators with compact resolvent, the existence of $(\psi_{F}^\ast,E_{F}^\ast)$ follows directly from the fact that the spectrum is discrete, and smoothness from the fact $\psi_{F}^\ast$ satisfies an eigenvalue problem of a smooth elliptic operator (see \cite{GilTru83}).
\begin{Remark}[The sign problem]\label{rem:signpb}
Computing directly $(\psi_{F}^\ast,E_{F}^\ast)$ using Monte-Carlo methods is an untractable problem known as the \emph{sign problem}. The latter can be summarized as follows. Since $H$ leaves invariant $\Hilb$, eigenfunctions of $H$ in $\R^d$ are either symmetric or skew-symmetric functions. Thus computing \eqref{eq:specsol} with:
\begin{itemize}
\item a skew-symmetric test function $\psi^{\rm I}$,
\item a non-symmetric positive initial condition $\psi_{\rm init} > 0$,
\end{itemize}
  yields:
\begin{equation*}
    \E\pare{ \psi^{\rm I}(W_{T})  \e^{-\int_{0}^{T }V(W_s) ds} } = 
  \sum_{\psi^{\ast,n} \in \Hilb} \e^{-E^{\ast,n} T } \frac{\dps \int_{\R^d} \psi^{\ast,n} \psi_{\rm init} }{\dps  \int_{\R^d} \psi_{\rm init}} \, \int_{\R^d} \psi^{\ast,n} \psi^{\rm I} ,
\end{equation*}
and in principle if $\ph$ is symmetric:
\begin{equation*}
\lim_{T\to + \infty}\frac{
  \E\pare{ \ph(W_{T}) \psi^{\rm I}(W_{T})  \e^{-\int_{0}^{T }V(W_s) ds} } }{ \E\pare{ \psi^{\rm I}(W_{T})  \e^{-\int_{0}^{T }V(W_s) ds} } }= \frac{\dps \int_{\R^d} \psi^{\ast}_{\rm F} \psi^{\rm I} \ph}{\dps \int_{\R^d} \psi^{\ast}_{\rm F} \psi^{\rm I}} .
\end{equation*}
Unfortunately, this computation relies crucially on the rate of vanishing of the normalisation which is due to the non-symmetry of the initial condition $\psi_{\rm init}>0 $.  Since stochastic processes used to compute such averages quickly forget their initial condition and have a symmetric dynamics in the full state space $\R^d$, one is compelled to compute ratios of vanishing averages (of the type $\frac{0}{0}$), with Monte-Carlo estimators having a non-vanishing statistical variance. This leads to infinite variance when $T$ is large. This forms the sign problem. This problem appears more generally when trying to solve higher eigenvalue problems with Monte-Carlo methods. Although, there is probably no general solution, solving the \emph{sign problem} for particular situations in high dimension would be considered as a major breakthrough.
\end{Remark}
In practice, $(\psi_{F}^\ast,E_{F}^\ast)$ can be approximated using a hybrid methodology in two steps. First, a \emph{trial wave function} is obtained using an analytical parametrization of $\Hilb$, usually of the form:
\begin{equation}
  \label{eq:trial}
  \psi_{\alpha,\theta}^I := J_{\alpha} \psi_{\theta}^{\rm skew}.
\end{equation}
In the above, $J_{\alpha} > 0$ is a strictly positive symmetric part called the \emph{Jastrow factor} and parametrized by $\alpha \in \R^{n}$; and $\psi_{\theta}^{\rm skew}$ is the skew-symmetric part parametrized by $\theta \in \R^{p}$. The most convenient numerical optimization method is then used to solve \eqref{eq:fermigroundstate} in the space of parameters formed by $(\alpha,\theta)$, and the solution of the optimization procedure is denoted with the parameters $(\alpha_0,\theta_0)$. This yields the set of functions $\set{\psi^{\rm I}_\theta}_{\theta \in \R^p}$ defining the nodal domains through
 $$\psi^{\rm I}_\theta :=  J_{\alpha_0} \psi_{\theta}^{\rm skew} . $$
\begin{Example}[Fermions]\label{ex:fermions2}
In the case of $N$ physical particles of Fermionic type, $\psi_{\theta}^{\rm skew}$ is
 built using a sum of Slater determinant, that is to say a sum of functions of the form:
$$
\det \pare{\phi_j(x_i)}_{i,j=1\ldots N},
$$
where $\pare{\phi_j}_{j=1\ldots N}$ are $N$ smooth functions of $\R^3$.
\end{Example}
The link with Section~\ref{sec:shape} and~\ref{sec:proba} is made by posing:
$$
\Omega_{\theta} := \node_\theta^+ \cup \node_\theta^-.
$$
The \emph{Fixed Node Approximation} consists in computing with a Monte-Carlo method the solution $(\psi^{\rm FN}_\theta,E^{\rm FN}_\theta)=(\psi^{\ast}_\theta,E^{\ast}_\theta)$ of the variational problem with Dirichlet conditions \eqref{eq:fixenodegroundstate}.  Such a computation is made using the probabilistic interpretations \eqref{eq:proba_psistar}-\eqref{eq:proba_Estar}, or more usually in practice using the variant \eqref{eq:proba_psistar2}-\eqref{eq:proba_Estar2}. This method is known under the DMC acronym (Diffusion Monte-Carlo) in Computational Chemistry, and has been widely studied, see for instance \cite{HamLesRey94,AssCafKhe00}.

As explained in the introduction, the key problem is that the nodal surface $\partial \node_{\theta} =(\psi_{\theta}^{\rm I})^{-1}(0)$ may be different from $(\psi_{F}^\ast)^{-1}(0)$. The open question is thus now to carrry out numerical methods associated to the variational problem \eqref{eq:fermifromfixednode}. In this context, a direct minimization of  $E_{\theta}^{\rm FN}$ in \eqref{eq:fermifromfixednode} requires the computation of:
$$
\nabla_\theta E_{\theta}^{\rm FN} 
$$
using formula \eqref{eq:vargroundstate} and the probabilistic interpretation \eqref{eq:hitdist}.

\section{Fixed node and symmetry breaking}\label{sec:fixednode}
In this section, we consider the context of Section~\ref{sec:fermion}, and we assume that the nodal surface defined in \eqref{eq:nodal} is a smooth manifold, a sufficient condition being:
\begin{equation}
  \label{eq:smoothnodes}
  \forall x \in \partial \node_\theta, \quad \nabla \psi_{\theta}^{\rm I}(x) \neq 0.
\end{equation}
Moereover, we assume that the assumptions of Section~\ref{sec:shape} on the mapping $\theta \mapsto \node_\theta$ apply, namely that there is a diffeomorphic mapping $\theta \mapsto R_\theta$ associated to the domain $\node_\theta $ such that \eqref{eq:smoothdiffeo} holds. 

A fundamental remark concerns the symmetry of the normal derivative of the fixed node groundstate $\nabla^+ \psi^{\rm FN}_{\theta} .n_+$, (where $\nabla^+$ and $n_+$ refer to the domain $\node_{\theta}^+$, and will be defined below), or equivalently the measure $\mu^\ast_{\theta,\lambda}$ in \eqref{eq:hitdist} defined on $\partial \node_{\theta}$ for stochastic processes evolving in $\node_\theta^+$. The latter indeed presents a \emph{symmetry breaking}, in the sense that they are \emph{only} invariant by the action of the symmetry sub-group 
$$\group^+ := \set{ S \in  \group \, \vert \, \det(S) = 1 } 
$$ on $\partial \node_{\theta}$. Before going further, we will precise notations in appropriate definitions and lemmas.
\begin{Lemma}
  $\group$ is a symmetry group of the nodal surface $\partial \node_{\theta}$ in the sense that any space transformation $S \in \group$ verify:
$$
S(\partial \node_{\theta}) = \partial \node_{\theta}.
$$
\end{Lemma}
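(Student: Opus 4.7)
The plan is to exploit directly the skew-symmetry of the trial wave function $\psi^{\rm I}_\theta$, since the nodal surface $\partial \node_\theta$ is by definition its zero set. The statement is essentially tautological once the algebraic identity $\psi^{\rm I}_\theta \circ S = \det(S)\,\psi^{\rm I}_\theta$ is combined with the fact that $\det(S) \in \{-1,+1\}$ is nonzero.

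First I would fix $S \in \group$ and $x \in \partial \node_\theta$. By definition of $\partial \node_\theta$ in \eqref{eq:nodal}, we have $\psi^{\rm I}_\theta(x) = 0$. Applying the skew-symmetry relation yields
\[
\psi^{\rm I}_\theta(S(x)) = \det(S)\,\psi^{\rm I}_\theta(x) = 0,
\]
so $S(x) \in \partial \node_\theta$. This gives the inclusion $S(\partial \node_\theta) \subseteq \partial \node_\theta$.

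For the converse inclusion, I would use that $\group$ is a group, so $S^{-1} \in \group$ as well. Applying the same argument to $S^{-1}$ gives $S^{-1}(\partial \node_\theta) \subseteq \partial \node_\theta$, i.e.\ $\partial \node_\theta \subseteq S(\partial \node_\theta)$. Combined with the previous inclusion, this yields the claimed equality $S(\partial \node_\theta) = \partial \node_\theta$.

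There is no real obstacle here: the only ingredients are the definition of $\partial \node_\theta$ as the zero set of $\psi^{\rm I}_\theta$, the skew-symmetry of $\psi^{\rm I}_\theta$, and the fact that $\group$ is a group of invertible transformations with $\det(S) \neq 0$. Note that exactly the same argument (using that $\det(S)$ is nonzero) shows that $S$ maps $\node_\theta^+ \cup \node_\theta^-$ onto itself, though $\node_\theta^\pm$ are individually preserved only when $\det(S) = +1$ and are swapped/permuted when $\det(S) = -1$; this observation will be useful later when analyzing the symmetry breaking of $\mu^{\rm FN}_{\theta,\lambda}$ on $\partial \node_\theta$.
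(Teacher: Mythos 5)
Your proof is correct and follows essentially the same route as the paper: both rest on the identity $\psi^{\rm I}_\theta\circ S=\det(S)\,\psi^{\rm I}_\theta$ with $\det(S)\neq 0$, so that $\psi^{\rm I}_\theta(x)=0$ if and only if $\psi^{\rm I}_\theta(S(x))=0$. The paper obtains the set equality directly from this equivalence together with the bijectivity of $S$, whereas you phrase the reverse inclusion via $S^{-1}\in\group$; the two are interchangeable.
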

\begin{proof}
  By skew-symmetry, it yields for any $x\in \R^d$
$$
\psi^{\rm I}_{\theta}(S(x)) = - \psi^{\rm I}_{\theta}(x)
$$
so that $\psi^{\rm I}_{\theta}(x) =0$ is equivalent to $\psi^{\rm I}_{\theta}(S(x))=0$.
\end{proof}
So consider now $\psi$ a skew-symmetric function in $\R^d$ with Dirichlet boundary conditions on the nodes
$
\psi \vert _{\partial \node_\theta} = 0,
$
and such that the restrictions 
$
\psi \vert_{\node_\theta ^+} : \overline{\node}_\theta ^+ \to \R,
$
on the one hand and 
$
\psi\vert_{\node_\theta ^-} : \overline{\node}_\theta ^- \to \R,
$
on the other hand are smooth on the associated closed domains. Remark that using skew-symmetry, for any $S \in \group$ verifying $\det \, S =-1$, $\psi\vert_{\node_\theta ^-} $ is the image of $\psi \vert_{\node_\theta ^+} $ through the transformation:
$$
\psi\vert_{\node_\theta ^-} = - \psi\vert_{\node_\theta ^+} \circ S .
$$
Two traces of $\nabla \psi$ on $\partial \node_{\theta}$ can then be defined depending if the reference domain is  $\node_\theta ^+$ or $\node_\theta ^-$. The definition of associated symmetric and skew-symmetric traces then follows.
\begin{Definition}
  Let $\psi$ a skew-symmetric function in $\R^d$ with smooth restrictions and Dirichlet boundary conditions on $\overline{\node}_\theta ^+$ and $\overline{\node_\theta} ^-$. $n_+$ denotes the exterior normal of $\partial \node_\theta ^+$, $n_-$ denotes the exterior normal of $\partial \node_\theta ^-$, so that:
  \begin{equation}
    \label{eq:n+-}
    n_+ = - n_- .
  \end{equation}
  $\nabla^+ \psi \cdot n_+$ denotes the exterior normal derivative of $\psi$ in $\node_\theta ^+$, and $\nabla^- \psi \cdot n_-$ the exterior normal derivative of $\psi$ in $\node_\theta ^-$. Then the skew-symmetrization of the normal derivative is defined by
  \begin{equation}
    \label{eq:nskew}
    \nabla^{\rm sk} \psi \cdot n = \frac{1}{2}\pare{\nabla^+ \psi \cdot n_+ +  \nabla^- \psi \cdot n_-},
  \end{equation}
and the plain symmetrization with respect to $n_+$ is defined by \eqref{eq:nsym}.

\end{Definition}
Note that $\psi \in C^{1}(\R^d)$ if and only if $\nabla^{\rm sk} \psi \cdot n  = 0$ on $\partial \node_\theta$, so that $\nabla^{\rm sk} \psi \cdot n$ can be seen as the gradient discontinuity of $\psi$ at $\partial \node_\theta$. One can now precise the idea of symmetry breaking on $\partial \node_\theta$.
\begin{Lemma}\label{lem:symbreak}
  Let $\psi$ a skew-symmetric function in $\R^d$ with smooth restrictions on $\overline{\node}_\theta ^+$ and $\overline{\node}_\theta ^-$, and Dirichlet boundary conditions on $\partial \node_\theta$. Then $\nabla^{\rm sk} \psi \cdot n $ is skew-symmetric, and $\nabla^{\rm sy} \psi \cdot n_+$ is symmetric, under the action of $\group$ on $\partial \node_\theta$. If $\ph \in C^{\infty}_c(\R^d)$, one has the integration by parts formula:
  \begin{equation}
    \label{eq:nodalipp}
  \frac{1}{2} \int_{\R^d} \nabla \ph \cdot \nabla \psi      = \int_{\partial \node _\theta} \ph  \nabla^{\rm sk} \psi \cdot n \d \sigma - \int_{\R^d} \ph \frac{\Delta}{2}(\psi).
  \end{equation}
Moreover, $\nabla^{\rm sk} \psi \cdot n =0$ on $\partial \node _\theta$ if and only if $\nabla^+ \psi \cdot n_+$ or $\nabla^- \psi \cdot n_-$ are symmetric. In the opposite case, $\nabla^+ \psi \cdot n_+$ and $\nabla^- \psi \cdot n_-$  are invariant only under the action of the special sub-group $\group^+ $ and a ''\emph{symmetry breaking}'' occurs.
\end{Lemma}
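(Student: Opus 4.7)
The plan is to trace how each trace and normal transforms under $S \in \group$ by using the defining identity $\psi \circ S = \det(S)\,\psi$ together with the chain rule for the isometry $S$, and then read off everything else from two clean transformation formulas. Differentiating gives $\nabla \psi(Sx) = \det(S)\, S\, \nabla \psi(x)$ wherever $\psi$ is smooth at $x$, and applying the same identity to the defining function $\psi^{\rm I}_\theta$ of the nodes (nonvanishing gradient on $\partial \node_\theta$ by \eqref{eq:smoothnodes}) gives $n_+(Sx) = \det(S)\, S\, n_+(x)$. If $\det S = +1$ then $S$ preserves each of $\node_\theta^+$ and $\node_\theta^-$ separately, so the relation above applied on each open side shows that each of $\nabla^\pm\psi \cdot n_\pm$ is $\group^+$-invariant, and hence so are both $\nabla^{\rm sk}\psi \cdot n$ and $\nabla^{\rm sy}\psi \cdot n_+$.

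The core of the argument is the case $\det S = -1$, in which $S$ exchanges the two nodal domains. Taking one-sided limits $y \to x$ from inside $\node_\theta^-$ (so that $S(y) \to S(x)$ from inside $\node_\theta^+$), the identity $\nabla \psi(Sy) = - S\, \nabla \psi(y)$ passes to the trace and yields $\nabla^+\psi(Sx) = - S\, \nabla^-\psi(x)$; symmetrically $\nabla^-\psi(Sx) = -S\,\nabla^+\psi(x)$. Combining with $n_\pm(Sx) = -S\, n_\pm(x)$, and using that $S$ preserves inner products, I obtain
\[
(\nabla^+\psi \cdot n_+) \circ S \;=\; -\,\nabla^-\psi \cdot n_-, \qquad (\nabla^-\psi \cdot n_-) \circ S \;=\; -\,\nabla^+\psi \cdot n_+.
\]
Taking half-sum and half-difference of these two relations immediately gives $(\nabla^{\rm sk}\psi\cdot n)\circ S = -\,\nabla^{\rm sk}\psi\cdot n$ and $(\nabla^{\rm sy}\psi \cdot n_+)\circ S = +\,\nabla^{\rm sy}\psi\cdot n_+$; together with the $\group^+$-invariance already noted, this is the asserted skew-symmetry and symmetry under the full $\group$.

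For the integration-by-parts identity I would split $\frac{1}{2}\int_{\R^d} \nabla\ph\cdot\nabla\psi$ as $\frac{1}{2}\int_{\node_\theta^+}\nabla\ph\cdot\nabla\psi + \frac{1}{2}\int_{\node_\theta^-}\nabla\ph\cdot\nabla\psi$, which is legitimate because $\psi$ is smooth up to the boundary from each side. Applying Green's first identity on each piece (the boundary contributions at infinity vanish because $\ph$ is compactly supported) produces two surface integrals on $\partial \node_\theta$, one with $n_+$ and one with $n_-$, plus two bulk terms. The bulk terms reassemble into $-\int_{\R^d}\ph\,\frac{\Delta}{2}\psi$, while the surface contributions add to $\int_{\partial\node_\theta}\ph\,\nabla^{\rm sk}\psi\cdot n\, d\sigma$ by definition of $\nabla^{\rm sk}\psi\cdot n$.

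The last claims are then an immediate reading of the trace-exchange formula. If $\nabla^{\rm sk}\psi\cdot n = 0$, then $\nabla^+\psi\cdot n_+ = -\nabla^-\psi\cdot n_-$; combined with $(\nabla^+\psi\cdot n_+)\circ S = -\nabla^-\psi\cdot n_-$ for $\det S = -1$, this says $(\nabla^+\psi\cdot n_+)\circ S = \nabla^+\psi\cdot n_+$, giving full $\group$-symmetry of $\nabla^+\psi\cdot n_+$, and the same bookkeeping works for $\nabla^-\psi\cdot n_-$; the converse runs the identical argument backwards. In the complementary case the same identity forbids $\group$-symmetry of the one-sided normal derivatives while preserving their $\group^+$-invariance, which is precisely the announced symmetry breaking. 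No step is genuinely hard; the only real care needed is the sign bookkeeping for the outward normal, since an $S$ with $\det S = -1$ simultaneously exchanges $\node_\theta^\pm$ and $n_\pm$.
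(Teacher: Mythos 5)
Your proposal is correct and follows essentially the same route as the paper: both derive the transformation laws $\nabla\psi\circ S=\det(S)\,S\nabla\psi$ and $n_\pm\circ S=\det(S)\,S n_\pm$ from skew-symmetry and orthogonality of $S$, deduce the exchange relations \eqref{eq:symprops} for the one-sided normal derivatives, obtain \eqref{eq:nodalipp} by applying Green's identity separately on $\node_\theta^+$ and $\node_\theta^-$, and read off the final equivalence from the case $\det(S)=-1$. Your treatment is only slightly more explicit about the one-sided limits and the half-sum/half-difference bookkeeping; no substantive difference.
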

\begin{proof}
By skew-symmetry of $\psi$, one has for any $S \in \group$ in $\node_\theta^+ \cup \node_\theta^-$:
$$
S \, \nabla \psi \circ S  = \det(S) \, \nabla \psi ,
$$
and then on $\partial \node_\theta$, 
$$
\arr{l}{
 S \, n_+ \circ S = \det(S)\,  n_+  \el
 S \, n_- \circ S = \det(S)\,   n_- ,
}
$$
so that since $S^TS = \Id$, it yields on $\partial \node_\theta$ for any $S \in \group$ :
\begin{equation}
  \label{eq:symprops}
  \sys{ll}{
    \nabla^+ \psi \cdot n_+ \circ S = \nabla^+ \psi \cdot n_+\quad &\text{if $\det(S)=1$}  \el
    \nabla^- \psi \cdot n_- \circ S = \nabla^- \psi \cdot n_-\quad &\text{if $\det(S)=1$}  \el
    \nabla^+ \psi \cdot n_+ \circ S = -\nabla^- \psi \cdot n_-\quad &\text{if  $\det(S)=-1$}.  
  }
\end{equation}

This yields the symmetry properties of $\nabla \psi^{\rm sk} \cdot n$ and $\nabla \psi^{\rm sy} \cdot n_+$ with respect to $\group$, and of $\nabla^+ \psi \cdot n_+$ and $\nabla^- \psi \cdot n_-$ with respect to $\group^+$. 

The integration by parts formula is obtained by applying seperately on $\node^+_\theta$ and $\node^-_\theta$ the classical Green's identity. 

Finally from the symmetry properties \eqref{eq:symprops},  $\nabla^- \psi \cdot n_- =  - \nabla^+ \psi \cdot n_+$ on $\partial \node_\theta$ if and only if $\nabla^+ \psi \cdot n_+ \circ S = \nabla^+ \psi \cdot n_+$ or $\nabla^- \psi \cdot n_- \circ S = \nabla^- \psi \cdot n_-$ for any $S \in \group$ with $\det (S) = -1$.
\end{proof}
One can now apply these remarks to the fixed node groundstate $\psi^{\rm FN}_{\theta}$ solution of \eqref{eq:fixenodegroundstate}. 
\begin{Lemma}\label{lem:eigenequiv}
  Let $\psi^{\rm FN}_{\theta}$ be the solution of \eqref{eq:fixenodegroundstate} with a smooth boundary $\partial \node_\theta$. Then the following assertions are equivalent:
  \begin{enumerate}
  \item $\psi^{\rm FN}_{\theta}$ is an eigenfunction of $H$ in $\L^2(\R^d)$.
  \item $\nabla^{\rm sk} \psi^{\rm FN}_{\theta} \cdot n$, as defined by \eqref{eq:nskew}, vanishes on $\partial \node_\theta$.
  \item $\nabla^+ \psi^{\rm FN}_{\theta} \cdot n_+$ or $\nabla^- \psi^{\rm FN}_{\theta} \cdot n_-$ are symmetric under the action of $\group$ on $\partial \node_\theta$.
  \end{enumerate}
\end{Lemma}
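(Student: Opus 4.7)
The plan is to prove the three implications (1) $\Rightarrow$ (2) $\Rightarrow$ (3) $\Rightarrow$ (1); the equivalence (2) $\Leftrightarrow$ (3) is essentially already in Lemma~\ref{lem:symbreak} applied to $\psi = \psi^{\rm FN}_\theta$, which is skew-symmetric on $\R^d$ (since $\psi^{\rm FN}_\theta$ is defined on $\node^+_\theta$ as the positive Dirichlet groundstate and extended to $\node^-_\theta$ by setting $\psi^{\rm FN}_\theta = -\psi^{\rm FN}_\theta \circ S$ for any $S$ with $\det S = -1$), and which has smooth restrictions to $\overline{\node}^+_\theta$ and $\overline{\node}^-_\theta$ by elliptic regularity applied to the Dirichlet problem on each nodal subdomain.

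The easy implication is (1) $\Rightarrow$ (2). If $\psi^{\rm FN}_\theta$ is an eigenfunction of $H$ on the whole of $\R^d$, then by standard elliptic regularity for the smooth operator $-\Delta/2 + V - E^{\rm FN}_\theta$, the function $\psi^{\rm FN}_\theta$ is $C^\infty(\R^d)$. In particular $\nabla \psi^{\rm FN}_\theta$ is continuous across $\partial \node_\theta$, so $\nabla^+\psi^{\rm FN}_\theta \cdot n_+ + \nabla^-\psi^{\rm FN}_\theta \cdot n_- = 0$, i.e.\ $\nabla^{\rm sk}\psi^{\rm FN}_\theta \cdot n = 0$.

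The main work is the converse (2) $\Rightarrow$ (1). Here I would use the integration by parts formula \eqref{eq:nodalipp} applied to $\psi^{\rm FN}_\theta$: for any $\ph \in C^\infty_c(\R^d)$,
\begin{equation*}
\frac{1}{2} \int_{\R^d} \nabla \ph \cdot \nabla \psi^{\rm FN}_\theta \, dx = \int_{\partial \node_\theta} \ph \, \nabla^{\rm sk} \psi^{\rm FN}_\theta \cdot n \, d\sigma - \int_{\R^d} \ph \, \frac{\Delta}{2}(\psi^{\rm FN}_\theta) \, dx,
\end{equation*}
where $\Delta \psi^{\rm FN}_\theta$ on the right is interpreted classically in $\node^+_\theta \cup \node^-_\theta$. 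If the boundary term vanishes by assumption (2), then in the distributional sense on $\R^d$ we have $-\frac{1}{2}\Delta \psi^{\rm FN}_\theta = -\frac{1}{2}\Delta_{\rm cl} \psi^{\rm FN}_\theta$, with no singular surface contribution. But in each nodal domain $\psi^{\rm FN}_\theta$ classically solves $(-\Delta/2 + V)\psi^{\rm FN}_\theta = E^{\rm FN}_\theta \psi^{\rm FN}_\theta$ (with the same energy $E^{\rm FN}_\theta$ in $\node^-_\theta$ by skew-symmetry of $\psi^{\rm FN}_\theta$ and invariance of $V$ under $\group$). Therefore $(H - E^{\rm FN}_\theta)\psi^{\rm FN}_\theta = 0$ holds as distributions on $\R^d$, and by elliptic regularity $\psi^{\rm FN}_\theta \in C^\infty(\R^d)$ is a classical eigenfunction, establishing (1). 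Note that $\psi^{\rm FN}_\theta \in \L^2(\R^d)$ follows from $\psi^{\rm FN}_\theta \in \L^2(\node^+_\theta)$ and skew-symmetry.

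The subtle point — the main obstacle — is justifying that vanishing of the normal-derivative jump is enough to remove every distributional contribution from $\partial \node_\theta$ in the Laplacian, so that we genuinely recover an eigenfunction on the whole space and not merely on the two open nodal domains. The formula \eqref{eq:nodalipp} encapsulates exactly this fact: the only obstruction to the distributional Laplacian of $\psi^{\rm FN}_\theta$ equalling its pointwise Laplacian is the singular surface measure $\nabla^{\rm sk}\psi^{\rm FN}_\theta \cdot n \, d\sigma$ supported on $\partial\node_\theta$; killing it is equivalent to saying that $\psi^{\rm FN}_\theta$ is $C^1$ across the nodal surface, which combined with the pointwise eigenvalue equation and the smoothness of $H$ bootstraps to full smoothness and hence to (1).
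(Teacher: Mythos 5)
Your proof is correct and follows essentially the same route as the paper: the equivalence of (2) and (3) is delegated to Lemma~\ref{lem:symbreak}, and the equivalence of (1) and (2) rests on the integration by parts formula \eqref{eq:nodalipp}, which the paper condenses into the single identity $\int_{\R^d} (H-E^{\rm FN}_{\theta})(\ph)\, \psi^{\rm FN}_{\theta} =\int_{\partial \node_\theta} \ph\, \nabla^{\rm sk} \psi^{\rm FN}_{\theta} \cdot n \,d \sigma$ valid for all $\ph \in C^{\infty}_c(\R^d)$. Your splitting into two implications, with the forward direction handled by elliptic regularity and the converse by identifying the surface term as the sole obstruction to the distributional eigenvalue equation, is just a more explicit rendering of the same argument.
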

\begin{proof}
The third point and the second point are equivalent by Lemma~\ref{lem:symbreak}. Now, if $\ph \in C^{\infty}_c(\R^d)$, integration by parts \eqref{eq:nodalipp} yields:
$$
\int_{\R^d} (H-E^{\rm FN}_{\theta})(\ph) \psi^{\rm FN}_{\theta} =\int_{\partial \node_\theta} \ph \nabla^{\rm sk} \psi^{\rm FN}_{\theta} \cdot n d \sigma,
$$
so that $\psi^{\rm FN}_{\theta} \in \L^2(\R^d)$ is an eigenfunction if and only if $\nabla^{\rm sk} \psi^{\rm FN}_{\theta} \cdot n =0$ on $\partial \node_\theta$.
\end{proof}
In the present context, the formula of the shape derivative of the Dirichlet groundstate can be written with symmetrized normal derivatives.  We denote the shape derivative $r_\theta^+$ (resp. $r_\theta^-$) of the nodes as defined in \eqref{eq:shapederiv},  with respect to the normal $n_+$ (resp. $n_-$), so that
$$
r_\theta^+ = - r_\theta^-.
$$
\begin{Proposition} The shape derivatives of the fixed node groundstate $\psi^{\rm FN}_{\theta}$ with respect to the nodal parameter $\theta$ reads
  \begin{equation}\label{eq:varfixednode}
\nabla_\theta E_\theta^{\rm FN} =    -\frac{1}{ \dps \int_{\node_\theta^+}(\psi^*_\theta) ^2}
  \int_{\partial \Omega_\theta}  \pare{ \nabla^{\rm sk} \psi^{\rm FN}_{\theta} \cdot n}\pare{ \nabla^{\rm sy} \psi^{\rm FN}_{\theta} \cdot n_+    }  r_{\theta}^+  \d \sigma .
\end{equation}
Moreover, $\nabla_\theta E_\theta^{\rm FN}=0$ for any parametrization $\theta \mapsto \partial \Omega_\theta$ of the nodes if and only if the fixed node groundstate $\psi^{\rm FN}_\theta$ is an eigenstate of $H$ in $\R^d$.
\end{Proposition}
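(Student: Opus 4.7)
The plan is to reduce the statement to the Dirichlet shape derivative formula \eqref{eq:vargroundstate} applied separately on the two nodal domains $\node_\theta^+$ and $\node_\theta^-$, and then to symmetrize. By the last two lines of \eqref{eq:fixenodegroundstate}, the fixed node groundstate $\psi^{\rm FN}_\theta$ restricted to $\node_\theta^+$ is the unique signed Dirichlet groundstate of $H$ on $\node_\theta^+$ with eigenvalue $E^{\rm FN}_\theta$, and similarly on $\node_\theta^-$. Applying \eqref{eq:vargroundstate} on $\node_\theta^+$ with normal $n_+$ and shape field $r_\theta^+$, and using that on $\partial\node_\theta$ the gradient is purely normal (since $\psi^{\rm FN}_\theta\vert_{\partial\node_\theta}=0$ so the tangential gradient vanishes), gives
\begin{equation*}
\nabla_\theta E_\theta^{\rm FN} = -\frac{1}{2\int_{\node_\theta^+}(\psi^{\rm FN}_\theta)^2}\int_{\partial\node_\theta}(\nabla^+\psi^{\rm FN}_\theta\cdot n_+)^2\, r_\theta^+\, d\sigma.
\end{equation*}
The same Lemma applied on $\node_\theta^-$, together with $n_-=-n_+$, $r_\theta^-=-r_\theta^+$, and $\int_{\node_\theta^-}(\psi^{\rm FN}_\theta)^2 = \int_{\node_\theta^+}(\psi^{\rm FN}_\theta)^2$ (by skew-symmetry), yields the analogous formula with $(\nabla^-\psi^{\rm FN}_\theta\cdot n_-)^2$ and opposite sign. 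Averaging the two expressions and writing $a:=\nabla^+\psi^{\rm FN}_\theta\cdot n_+$, $b:=\nabla^-\psi^{\rm FN}_\theta\cdot n_-$, the algebraic identity
\begin{equation*}
\tfrac{1}{4}(a^2-b^2)=\tfrac{a+b}{2}\cdot\tfrac{a-b}{2}=(\nabla^{\rm sk}\psi^{\rm FN}_\theta\cdot n)(\nabla^{\rm sy}\psi^{\rm FN}_\theta\cdot n_+),
\end{equation*}
combined with \eqref{eq:nskew}--\eqref{eq:nsym}, produces exactly \eqref{eq:varfixednode}.

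For the equivalence, one direction is immediate: by Lemma~\ref{lem:eigenequiv}, if $\psi^{\rm FN}_\theta$ is an $H$-eigenfunction on $\R^d$ then $\nabla^{\rm sk}\psi^{\rm FN}_\theta\cdot n\equiv 0$ on $\partial\node_\theta$, so the integrand in \eqref{eq:varfixednode} vanishes identically. For the converse, assume $\nabla_\theta E_\theta^{\rm FN}=0$ for every parametrization. Any equivariant parametrization $R_\theta$ of $\partial\node_\theta$ (i.e.\ commuting with $\group$, which is the natural class since $\partial\node_\theta$ is $\group$-invariant) produces a shape field $r_\theta^+$ which is skew-symmetric under $\group$, as follows from the relation $n_+(Sx)=-Sn_+(x)$ for $\det S=-1$ together with $R_\theta\circ S=S\circ R_\theta$. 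Conversely, any smooth skew-symmetric scalar field on $\partial\node_\theta$ can be realized (or approximated) as such an $r_\theta^+$ by equivariant normal extension. The integrand $(\nabla^{\rm sk}\psi^{\rm FN}_\theta\cdot n)(\nabla^{\rm sy}\psi^{\rm FN}_\theta\cdot n_+)$ is itself skew-symmetric (skew times symmetric, by Lemma~\ref{lem:symbreak}), so the vanishing of \eqref{eq:varfixednode} against all skew-symmetric $r_\theta^+$ forces this integrand to vanish pointwise on $\partial\node_\theta$.

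To conclude, I would invoke the Hopf strong maximum principle applied to the strictly signed groundstate: $\psi^{\rm FN}_\theta>0$ in $\node_\theta^+$ gives $a=\nabla^+\psi^{\rm FN}_\theta\cdot n_+<0$ strictly on $\partial\node_\theta$, and $\psi^{\rm FN}_\theta<0$ in $\node_\theta^-$ gives $b=\nabla^-\psi^{\rm FN}_\theta\cdot n_-> 0$ strictly. Hence the symmetric factor $\nabla^{\rm sy}\psi^{\rm FN}_\theta\cdot n_+=(a-b)/2$ is strictly negative everywhere on the nodal surface and never vanishes. Therefore the skew-symmetric factor $\nabla^{\rm sk}\psi^{\rm FN}_\theta\cdot n$ must vanish identically on $\partial\node_\theta$, and Lemma~\ref{lem:eigenequiv} gives that $\psi^{\rm FN}_\theta$ is an $H$-eigenfunction on $\R^d$.

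The main delicate point I expect is justifying that the class of skew-symmetric shape fields $r_\theta^+$ arising from admissible diffeomorphisms $R_\theta$ satisfying \eqref{eq:smoothdiffeo} is rich enough (e.g.\ dense in a suitable space of skew-symmetric test functions on $\partial\node_\theta$) to conclude pointwise vanishing of the integrand. The Hopf-lemma step, by contrast, is clean once the strict sign of $\psi^{\rm FN}_\theta$ on each nodal domain and the smoothness of $\partial\node_\theta$ (ensured by \eqref{eq:smoothnodes}) are in hand.
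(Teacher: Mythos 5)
Your proof is correct and follows essentially the same route as the paper: decompose the Dirichlet shape-derivative formula \eqref{eq:vargroundstate} over the two nodal domains, use that the gradient of $\psi^{\rm FN}_\theta$ is purely normal on $\partial\node_\theta$, and apply the identity $\tfrac{1}{4}(a^2-b^2)=\pare{\nabla^{\rm sk}\psi^{\rm FN}_\theta\cdot n}\pare{\nabla^{\rm sy}\psi^{\rm FN}_\theta\cdot n_+}$ together with $r_\theta^+=-r_\theta^-$. You in fact supply two justifications that the paper's one-line treatment of the converse leaves implicit --- the Hopf boundary-point lemma guaranteeing that the symmetric factor $\nabla^{\rm sy}\psi^{\rm FN}_\theta\cdot n_+$ is strictly negative and hence never vanishes, and the need for the admissible skew-symmetric shape fields $r_\theta^+$ to be rich enough to force pointwise vanishing of the (skew-symmetric) integrand --- and your formulation of the resulting condition, $\nabla^{\rm sk}\psi^{\rm FN}_\theta\cdot n\equiv 0$, i.e.\ $\nabla^+\psi^{\rm FN}_\theta\cdot n_+=-\nabla^-\psi^{\rm FN}_\theta\cdot n_-$, is the correct one (the paper's closing sentence appears to drop this minus sign).
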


\begin{proof}
The shape derivative formula \eqref{eq:vargroundstate} can be decomposed as the sum of the part due to $\node_\theta^+$ and to $\node_\theta^-$:
$$
 \nabla_\theta E_\theta^{\rm FN} = - \frac{1}{\dps 2\int_{\node_\theta^+ \cup \node_\theta^- }(\psi^*_\theta) ^2 }  \pare{  \int_{\partial \node_\theta}  \pare{ \abs{ \nabla^+ \psi^{\rm FN}_\theta  }^2 r_\theta^+  + \abs{ \nabla^- \psi^{\rm FN}_\theta  }^2  r_\theta^- }  \, d\sigma }.
$$
Since $\psi^*_\theta=0$ on $\partial \node_\theta$, then $\abs{\nabla^+ \psi^{\rm FN}_\theta} = \abs{\nabla^+ \psi^{\rm FN}_\theta \cdot n_+ }$ and symmetrically $\abs{\nabla^- \psi^{\rm FN}_\theta} = \abs{\nabla^- \psi^{\rm FN}_\theta \cdot n_- }$. We get:
\begin{align*}
  4 r_\theta^+ \pare{ \nabla^{\rm sk} \psi^{\rm FN}_{\theta} \cdot n}\pare{ \nabla^{\rm sy} \psi^{\rm FN}_{\theta} \cdot n_+    } & = 
 r_\theta^+ \pare{ \abs{ \nabla^+ \psi^{\rm FN}_\theta  }^2   - \abs{ \nabla^- \psi^{\rm FN}_\theta}^2 } \\
& =  r_\theta^+ \abs{ \nabla^+ \psi^{\rm FN}_\theta  }^2   + r_\theta^- \abs{ \nabla^- \psi^{\rm FN}_\theta}^2 ,
\end{align*}
where in the last line we have used $r_\theta^+ =-r_\theta^-$. Then \eqref{eq:varfixednode} follows. As a consequence, $\nabla_\theta E_\theta^{\rm FN} =0$ if and only if $\nabla^+ \psi^{\rm FN}_\theta \cdot n_+ = \nabla^- \psi^{\rm FN}_\theta \cdot n_- $, which is equivalent by Lemma~\ref{lem:eigenequiv} to the fact that $\psi^{\rm FN}_\theta$ is an eigenfunction.
\end{proof}
We can now state the main result of this paper, which consists in a characterization of the nodes of eigenstates through the symmetry of a random stopped process, and suggests a method to evaluate the shape derivative of the fixed node energy $ \nabla_\theta E_\theta^{\rm FN}$.
\begin{Theorem}\label{the:varFNproba}
Consider a Brownian motion $t \mapsto W_t^+$ in $\node_\theta^+$, and $\tau^+$ the hitting time of $\partial \node_\theta$.  Consider the measure $\mu^{\rm FN}_{\theta,\lambda}$ on $\partial \node_\theta$ defined for any $\lambda < E_\theta^{\rm FN}$ by \eqref{eq:munode}.
Then the fixed node groundstate $\psi^{\rm FN}_\theta$ is an eigenfunction of $H$ in $\R^d$, if and only if $\mu^{\rm FN}_{\theta,\lambda}$ is invariant under the symmetry group $\group$. Moreover, the shape derivative of the fixed node energy $E_\theta^{\rm FN}$ is given by \eqref{eq:partialEFN}.
\end{Theorem}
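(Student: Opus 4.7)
The plan is to derive both assertions from a single input: the explicit Radon--Nikodym density of $\mu^{\rm FN}_{\theta,\lambda}$ with respect to the surface measure $\sigma$. Applying Proposition~\ref{pro:hitdist} directly to the domain $\node_\theta^+$, where $\psi^{\rm FN}_\theta$ plays the role of the Dirichlet groundstate, gives
\[
\d\mu^{\rm FN}_{\theta,\lambda} \;=\; -\frac{\nabla^+\psi^{\rm FN}_\theta \cdot n_+}{\, 2(E^{\rm FN}_\theta - \lambda)\int_{\node_\theta^+} \psi^{\rm FN}_\theta \,}\,\d\sigma,
\]
a positive density on $\partial\node_\theta$ by Hopf's lemma under \eqref{eq:smoothnodes}. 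Inserting this density into the shape-derivative formula \eqref{eq:vargroundstate} applied to $\node_\theta^+$ (using $\abs{\nabla^+\psi^{\rm FN}_\theta}^2 = (\nabla^+\psi^{\rm FN}_\theta \cdot n_+)^2$ on $\partial\node_\theta$ since $\psi^{\rm FN}_\theta$ vanishes there), and converting the ratio $\int \psi^{\rm FN}_\theta / \int (\psi^{\rm FN}_\theta)^2$ into $1/\int \psi^{\rm FN}_\theta\,\d\eta^{\rm FN}_\theta$ via \eqref{eq:proba_psistar} on $\node_\theta^+$, yields the first equality in \eqref{eq:partialEFN}.

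The two remaining claims are symmetry arguments. For the eigenfunction characterisation, the $\group$-invariance of $\sigma$ (since $\group \subset O(\R^d)$) makes $\mu^{\rm FN}_{\theta,\lambda}$ $\group$-invariant if and only if its density $-\nabla^+\psi^{\rm FN}_\theta \cdot n_+$ is $\group$-symmetric. By \eqref{eq:symprops} this quantity is automatically $\group^+$-invariant, so the condition reduces to invariance under any $S$ with $\det(S)=-1$, which reads $\nabla^+\psi^{\rm FN}_\theta \cdot n_+ = -\nabla^-\psi^{\rm FN}_\theta \cdot n_-$, i.e.\ $\nabla^{\rm sk}\psi^{\rm FN}_\theta \cdot n \equiv 0$ on $\partial\node_\theta$, and Lemma~\ref{lem:eigenequiv} closes the equivalence with $\psi^{\rm FN}_\theta$ being an eigenfunction of $H$ on $\R^d$. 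For the second equality in \eqref{eq:partialEFN}, since $\nabla^+\psi^{\rm FN}_\theta \cdot n_+ - 2\nabla^{\rm sy}\psi^{\rm FN}_\theta \cdot n_+ = \nabla^-\psi^{\rm FN}_\theta \cdot n_-$, it suffices to show
\[
\int_{\partial\node_\theta} r_\theta^+\, (\nabla^-\psi^{\rm FN}_\theta \cdot n_-)\,\d\mu^{\rm FN}_{\theta,\lambda} \;=\; 0,
\]
which after substituting the $\sigma$-density reduces to the vanishing of $\int_{\partial\node_\theta} r_\theta^+ (\nabla^+\psi^{\rm FN}_\theta \cdot n_+)(\nabla^-\psi^{\rm FN}_\theta \cdot n_-) \d\sigma$. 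The product of normal derivatives is fully $\group$-symmetric (under $\det(S)=-1$ the two factors swap with simultaneous sign flips by \eqref{eq:symprops}, while $\group^+$-invariance is immediate); the field $r_\theta^+$ is $\group$-skew-symmetric via a short computation ($R_\theta \circ S = S \circ R_\theta$ combined with $n_+ \circ S = \det(S) S n_+$ yields $r_\theta^+ \circ S = \det(S)\, r_\theta^+$); and $\sigma$ is $\group$-invariant, so the integral of a $\group$-skew integrand vanishes.

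The principal subtlety throughout is the partial symmetry-breaking of $\mu^{\rm FN}_{\theta,\lambda}$: this measure is only $\group^+$-invariant in general, so full $\group$-symmetry cannot be invoked directly on integrals against $\d\mu^{\rm FN}_{\theta,\lambda}$. My plan systematically circumvents this by pulling back to integrals against the fully $\group$-invariant surface measure $\d\sigma$ via the explicit density, at which point the rules \eqref{eq:symprops} and the $\group$-skew-symmetry of $r_\theta^+$ combine with routine change-of-variable arguments to close both assertions.
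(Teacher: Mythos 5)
Your proposal is correct and follows essentially the same route as the paper: it identifies the density $\d\mu^{\rm FN}_{\theta,\lambda}\propto -\nabla^+\psi^{\rm FN}_\theta\cdot n_+\,\d\sigma$ via Proposition~\ref{pro:hitdist}, combines it with the shape-derivative formula and \eqref{eq:proba_psistar} to get the first line of \eqref{eq:partialEFN}, reduces the symmetry statement to $\nabla^{\rm sk}\psi^{\rm FN}_\theta\cdot n\equiv 0$ via \eqref{eq:symprops} and Lemma~\ref{lem:eigenequiv}, and obtains the second line by pulling back to $\d\sigma$ and integrating the $\group$-skew field $r_\theta^+$ against a $\group$-symmetric factor. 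The only cosmetic differences are that you bypass the intermediate formula \eqref{eq:varfixednode} and justify the skew-symmetry of $r_\theta^+$ via an equivariance assumption on $R_\theta$ rather than via $r_\theta^+=\nabla_\theta\psi^{\rm I}_\theta/\abs{\nabla\psi^{\rm I}_\theta}$ as the paper implicitly does; neither affects correctness.
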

\begin{proof}
  The proof consists in a direct application of Lemma~\ref{lem:probaground} and Proposition~\ref{pro:hitdist}. Indeed, \eqref{eq:hitdist} yields the identity of measures on $\partial \node_\theta$:
$$
d \mu_{\theta, \lambda}^{\rm FN}  = -\frac{  \nabla^+ \psi^{\rm FN}_\theta \cdot n_+ \, d\sigma}{\dps 2 (E^{\rm FN}_\theta - \lambda   )\int_{\node_\theta^+} \pare{ \psi^{\rm FN}_\theta}^2 }
;
$$
and Lemma~\ref{lem:eigenequiv} enables to characterize eigenfunctions of $H$ from the symmetry of $\eta_\theta^{\rm FN, \lambda}$. Next, since $r^+_\theta$ on $\partial \node_\theta$ is skew-symmetric, and the integral on $\partial \node_\theta$ of integrable skew-symmetric functions vanishes, it yields:
$$
\int _{\partial \node_\theta} r^+_\theta   \nabla^{\rm sy} \psi_\theta^{\rm FN} \cdot n_+ \nabla^{+} \psi^{\rm FN}_\theta \cdot n_+ \, d\sigma =  \int_{\partial \node_\theta}  r^+_\theta  \nabla^{\rm sy} \psi^{\rm FN}_\theta \cdot n_+  \nabla^{\rm sk} \psi^{\rm FN}_\theta \cdot n \, d\sigma .
$$
Then \eqref{eq:varfixednode} with \eqref{eq:proba_psistar2} yield the result \eqref{eq:partialEFN}.
\end{proof}

\section{Comments on Monte-Carlo methods}\label{sec:montecarlo}
Theorem~\ref{the:varFNproba} suggests a Monte-Carlo general strategy to approximate the fixed node energy variation \eqref{eq:partialEFN} by using the probabilistic interpretation \eqref{eq:munode}-\eqref{eq:etanode}. As already commented in the introduction, computing \eqref{eq:munode} with Monte-Carlo methods is a well known topic, and
computing \eqref{eq:etanode} can be carried out using independent stopped processes. In fact, the most straightforward limitation of the method consists in approximating on the nodes $\partial \node_\theta$ the symmetrized gradient 
$$
\nabla^{\rm sy} \psi^{\rm FN}_\theta \sim \nabla \psi^n_\theta
$$
by a sequence $\psi_\theta^n \xrightarrow[]{n \to + \infty} \psi^{\rm FN}_\theta$ of smooth approximating functions in $\R^d$, usually converging in the sense of the energy norm. Indeed, there is no reason {\it a priori} that the gradient $\nabla \psi^n_\theta$ converges in a pointwise sense towards $\nabla^{\rm sy} \psi^{\rm FN}_\theta$, which is necessary when approximating the measure $\eta_\theta^{\rm FN, \lambda}$ using a Monte-Carlo sample. However, in the context of optimization, it is fundamental to remark that the stationary nodal surfaces solution of
$
\nabla_\theta E_\theta^{\rm FN} = 0
$
for any shape derivative are not modified when approximating $\nabla^{\rm sy} \psi^{\rm FN}_\theta \cdot n_+$ by any symmetric field $\nabla \psi^n_\theta \cdot n^+$. This is the meaning of the following proposition. 
\begin{Proposition} Consider any smooth skew-symmetric function $\psi_\theta^{\rm I} $  in $\R^d$ with zeros given by $\partial \node_\theta$. Then the approximation $\widehat{\nabla_\theta E_\theta^{\rm FN}}$ given by \eqref{eq:partialEFNapproxdef} yields \eqref{eq:partialEFNapprox}-\eqref{eq:partialEFNapproxbis}. Moreover the fixed point equation
$$
 \widehat{\nabla_\theta E_\theta^{\rm FN}} = 0
$$
holds for any shape variation $r^+_\theta$ of the nodes $\partial \node_\theta$ if and only if $\psi_\theta^{\rm FN} $ is an eigenfunction of the Hamiltonian $H$ in $\R^d$.
\end{Proposition}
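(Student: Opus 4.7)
The plan is to first chain together the three expressions for $\widehat{\nabla_\theta E_\theta^{\rm FN}}$ and then handle the fixed-point characterization by a symmetry argument. For the step from \eqref{eq:partialEFNapproxdef} to \eqref{eq:partialEFNapprox}, I invoke the boundary variation identity \eqref{eq:boundvar} applied to the smooth skew-symmetric trial function $\psi^{\rm I}_\theta$, which gives $r_\theta^+ \nabla \psi^{\rm I}_\theta \cdot n_+ = -\nabla_\theta \psi^{\rm I}_\theta$ pointwise on $\partial \node_\theta$; substitution into \eqref{eq:partialEFNapproxdef} immediately yields the second form. For the step from \eqref{eq:partialEFNapprox} to \eqref{eq:partialEFNapproxbis}, I apply \eqref{eq:hitdist} adapted to $\node_\theta^+$ with test function $\phi = \nabla_\theta \psi^{\rm I}_\theta$ to rewrite the boundary integral against $\mu^{\rm FN}_{\theta,\lambda}$ as a surface integral of $\nabla_\theta \psi^{\rm I}_\theta \, \nabla \psi^{\rm FN}_\theta \cdot n_+$. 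Two applications of Green's identity on $\node_\theta^+$, exploiting the homogeneous Dirichlet condition $\psi^{\rm FN}_\theta |_{\partial \node_\theta} = 0$ and the eigenvalue equation $H \psi^{\rm FN}_\theta = E_\theta^{\rm FN} \psi^{\rm FN}_\theta$, then transfer the derivatives onto $\nabla_\theta \psi^{\rm I}_\theta$ and produce the volume integral $\int_{\node_\theta^+} \psi^{\rm FN}_\theta (H-E_\theta^{\rm FN})(\nabla_\theta \psi^{\rm I}_\theta) \, dx$. Re-expressing this through the fixed-node analogue of \eqref{eq:proba_psistar} for the density of $\eta_\theta^{\rm FN}$ yields \eqref{eq:partialEFNapproxbis}.

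For the second part, I exploit the symmetry structure. Since $\psi^{\rm I}_\theta$ is smooth and skew-symmetric on all of $\R^d$, its gradient is continuous across the nodes, so by Lemma \ref{lem:symbreak}, $\nabla \psi^{\rm I}_\theta \cdot n_+$ is symmetric on $\partial \node_\theta$. Because the parametrization $\theta \mapsto \psi^{\rm I}_\theta$ keeps each member of the family skew-symmetric, $\nabla_\theta \psi^{\rm I}_\theta$ is itself skew-symmetric, and \eqref{eq:boundvar} then forces any admissible shape field $r_\theta^+$ on $\partial \node_\theta$ to be skew-symmetric. Consequently the integrand $r_\theta^+ \nabla \psi^{\rm I}_\theta \cdot n_+$ appearing in \eqref{eq:partialEFNapproxdef} is always skew-symmetric on $\partial \node_\theta$. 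For the ``only if'' direction, if $\psi^{\rm FN}_\theta$ is an eigenfunction of $H$ on $\R^d$, Theorem \ref{the:varFNproba} gives that $\mu^{\rm FN}_{\theta,\lambda}$ is $\group$-invariant, and integrating a skew-symmetric function against it yields zero. Conversely, under the regularity assumption \eqref{eq:smoothnodes}, $\nabla \psi^{\rm I}_\theta \cdot n_+$ is nowhere-vanishing on $\partial \node_\theta$, so as $r_\theta^+$ ranges over admissible skew-symmetric shape fields the product $r_\theta^+ \nabla \psi^{\rm I}_\theta \cdot n_+$ can be made to equal any prescribed skew-symmetric test function on $\partial \node_\theta$. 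Hence $\widehat{\nabla_\theta E_\theta^{\rm FN}} = 0$ for every admissible $r_\theta^+$ forces the skew-symmetric part of $\mu^{\rm FN}_{\theta,\lambda}$ to vanish, so $\mu^{\rm FN}_{\theta,\lambda}$ is symmetric, and Theorem \ref{the:varFNproba} concludes that $\psi^{\rm FN}_\theta$ is an eigenfunction of $H$ on $\R^d$.

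The main technical obstacle I anticipate is justifying the double integration by parts when $\nabla_\theta \psi^{\rm I}_\theta$ is merely smooth and not compactly supported on $\node_\theta^+$, which should be controllable by the same decay and regularity hypotheses that underlie \eqref{eq:hitdist}. Conceptually, the least routine step is the converse of the symmetry characterization, which hinges on being able to factor arbitrary skew-symmetric boundary fields through the nowhere-vanishing weight $\nabla \psi^{\rm I}_\theta \cdot n_+$; the rest of the argument is essentially a bookkeeping exercise combining the preceding lemmas.
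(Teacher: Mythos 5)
Your proposal is correct and follows essentially the same route as the paper: the identity \eqref{eq:boundvar} applied to $\psi^{\rm I}_\theta$ gives \eqref{eq:partialEFNapprox}, formula \eqref{eq:hitdist} plus Green's identity on $\node_\theta^+$ gives \eqref{eq:partialEFNapproxbis}, and the fixed-point characterization reduces to the symmetry of $\mu^{\rm FN}_{\theta,\lambda}$ because $\nabla \psi^{\rm I}_\theta \cdot n_+$ is a symmetric, nowhere-vanishing weight while $r_\theta^+$ spans all skew-symmetric boundary fields. The only cosmetic difference is that you close the argument via Theorem~\ref{the:varFNproba} where the paper cites Lemma~\ref{lem:eigenequiv} directly, but these are the same equivalence.
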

\begin{proof}
Since $\partial \node_\theta$ is defined implicitely by $\psi^{\rm I}_\theta = 0$, deriving $\psi^{\rm I}_{\theta+h}(x+r_{\theta + h}^+ n_+) =0$ with respect to $h$ yields on $\partial \node_\theta$:
$$
\nabla_\theta \psi^{\rm I}_{\theta} + r_{\theta}^+  \nabla \psi^{\rm I}_{\theta} \cdot n_+ = \nabla_\theta \psi^{\rm I}_{\theta} -  r_{\theta}^+  \abs{\nabla \psi^{\rm I}_{\theta} } = 0,
$$
where in the last line we have used the fact that $\nabla \psi^{\rm I}_{\theta} \cdot n_+=-\abs{\nabla \psi^{\rm I}_{\theta} }$ on $\partial \node_\theta$ since $\node_\theta^+= \set{ x\in \R^d \, \vert \, \psi^{\rm I}_{\theta}(x) > 0}$.
This gives \eqref{eq:partialEFNapprox}. Next \eqref{eq:hitdist} yields:
$$
\arr{rl}{
\int_{\partial \node_\theta^+}  \nabla_\theta \psi^{\rm I}_{\theta} \d\mu^{\rm FN}_{\theta,\lambda} 
= &\dps -\frac{1}{\dps 2(E^{\rm FN}_\theta - \lambda   ) \int_{\node_\theta^+} \psi^{\rm FN}_\theta} 
   \int_{\partial \node_\theta}  \nabla_\theta \psi^{\rm I}_{\theta} \nabla^+ \psi^{\rm FN}_\theta \cdot n_+ \d\sigma ,
}
$$
so that integration by parts in $\node_\theta^+$ gives \eqref{eq:partialEFNapproxbis}.

 Finally remark that $\nabla \psi_\theta^{\rm I} \cdot n_-$ is symmetric and that the shape derivative $r^+_\theta$ is spanning all skew-symmetric field, so that the fixed point equation is equivalent to the fact that $\mu_\theta^{\rm FN}$ is symmetric. Then Lemma~\ref{lem:eigenequiv} enables to conclude.
\end{proof}

Eventually, the following algorithm (without details) is suggested to estimate $\nabla_\theta E_\theta^{\rm FN}$. It might be referred to as Nodal Monte-Calo (NMC). 
\begin{Algorithm}[NMC]
  Consider a parametrization of wave functions of the form \eqref{eq:trial}.
Then the following steps are suggested:
\begin{enumerate}
\item Optimize the symmetric ''Jastrow'' factor $ J_\alpha $ according to the variational problem with fixed node \eqref{eq:fixenodegroundstate}, and denote $ \psi_{ \theta}^{\rm I}$ the obtained trial function.
\item Generate a sample according to the probability distribution $\eta^{\rm FN}_{\theta}$ in $\node_\theta^+$ defined in \eqref{eq:munode}. Use for instance a long time trajectory of a drifted stochastic process of the type \eqref{eq:drifteds} with Feynman-Kac weights as in formula \eqref{eq:proba_psistar2}.
\item Use the latter sample as initial conditions; and sample independent Brownian motions. Stop them when they hit the nodes $\partial \node_\theta$. Then compute the measure $\mu^{\rm FN}_{\theta,\lambda}$ according to \eqref{eq:munode}.
\item Estimate the variations of the fixed node energy $\nabla_\theta E_\theta^{\rm FN}$ with \eqref{eq:partialEFNapprox}.
\end{enumerate}
\end{Algorithm}
As a conclusion, we mention two possible strategies of variance reduction that will be necessary for efficient computations. Their development is left for future work.
  \begin{itemize}
  \item First, a drifted diffusion instead of plain Brownian motion may be used in \eqref{eq:munode} to reduce the variance caused by the exponential weights. However, the classical stochastic differential equation \eqref{eq:drifteds} where $\psi^{\rm I}$ vanishes at the nodes cannot be used. Indeed, the repulsive drift at the nodes prevent the process from hitting the latter, and \eqref{eq:munode} no longer holds. Instead of a skew-symmetric guiding function $\psi^{\rm I}$, we propose to use as a drift a symmetric function $\psi_B$, for instance an approximation of the \emph{Bosonic} groundstate $\psi^\ast_{\rm B} > 0$ of $H$ in $\R^d$ solution of the eigenvalue problem
$$
H(\psi^\ast_{\rm B}) = E^{\ast}_{\rm B} \psi^\ast_{\rm B}.
$$ 
This may remove the variance of the exponential Feynman-Kac weights, while letting the walkers hit the nodal surface. Such variance reductions are referred to as \emph{importance sampling} methods. Note that in known algorithms ({\it e.g.} DMC), such guided walkers will very quickly hit the nodal surface, causing high variance branching. However, in the proposed method, the algorithm is stopped when walkers have hit the nodal surface once, and no branching is performed, avoiding such kind of variance instability.
\item Second, a key point would be to develop a coupling (in the probabilistic sense) between the random process used to compute $\mu^{\rm FN}_{\theta}$ in \eqref{eq:munode} and another random process stopped on the nodes $\partial \node_\theta$, with a distribution denoted $\widetilde{\mu}^{\rm FN}_{\theta}$. The goal is to design $\widetilde{\mu}^{\rm FN}_{\theta}$ so that the following two features hold: first computing the average \eqref{eq:partialEFNapprox} with $\widetilde{\mu}^{\rm FN}_{\theta}$ always yields $0$; second the coupling is perfect in the special case when $\mu^{\rm FN}_{\theta}$ is symmetric (that is to say the Monte-Carlo method computing $\mu^{\rm FN}_{\theta}$ and $\widetilde{\mu}^{\rm FN}_{\theta}$ has to be the same with the same random numbers). In the physics terminology, this would yield a \emph{zero variance estimator}, which we prefer to call \emph{asymptotic variance reduction}, in the sense that thanks to the variance reduction, the variance of the estimator scales with the quantity to be computed when the latter becomes small.
  \end{itemize}
\section*{Acknowledgements} The author would like to thank E. Cancès, B. Jourdain and T. Lelièvre for pointing out the problem and fruitfull discussions, as well as the referees for very carefull reading and useful suggestions.
\bibliographystyle{plain}
\bibliography{nodes}

\end{document}